\documentclass[a4paper,11pt]{article}
\usepackage{latexsym}
\usepackage{amsmath}
\usepackage{amssymb}
\usepackage{enumerate}
\usepackage{amsthm}
\usepackage{array}
\usepackage{bm}
\usepackage{mathrsfs}
\pagestyle{plain}
\newtheorem{theorem}{Theorem}[section]
\newtheorem{lemma}[theorem]{Lemma}
\newtheorem{corollary}[theorem]{Corollary}
\newtheorem{proposition}[theorem]{Proposition}
\theoremstyle{definition}

\newtheorem{remark}[theorem]{Remark}

\newtheorem*{annotation}{Annotation}




\DeclareMathOperator{\Ker}{Ker}

\DeclareMathOperator{\Tr}{Tr}
\DeclareMathOperator{\Nm}{Nm}
\DeclareMathOperator{\diag}{diag}

\newcommand{\gyokan}{\vskip 6pt}
\title{Relatives of the Hermitian curve}
\author{
Masaaki Homma
\\
Department of Mathematics\\
Kanagawa University\\
Yokohama 221-8686, Japan\\
homma@kanagawa-u.ac.jp
\and
Seon Jeong Kim
\\
 Department of Mathematics, and RINS\\
Gyeongsang National University\\
Jinju 660-701, Korea \\
skim@gnu.kr
}

\date{}

\begin{document}
\maketitle
\begin{abstract}
We introduce the notion of a relative of the Hermitian curve of degree $\sqrt{q}+1$
over $\mathbb{F}_q$, 
which is a plane curve defined by
\[(x^{\sqrt{q}}, y^{\sqrt{q}}, z^{\sqrt{q}})A {}^t \!(x,y,z) =0\]
with $A \in GL(3, \mathbb{F}_q)$,
and study their basic properties,
one of which is that the number of
$\mathbb{F}_q$-points of any relative of the Hermitian curve
of degree $\sqrt{q}+1$
is congruent to $1$ modulo $\sqrt{q}$.
\\
In the latter part of this paper,
we classify those curves having two or more rational inflexions.
\\
{\em Key Words}: Plane curve, Finite field, Rational point
\\
{\em MSC}: 
14G15,  14H50, 14G05, 11G20
\end{abstract}

\section{Introduction}
Throughout this paper, $q$ denotes an even power of a prime number $p$.
So $\sqrt{q}$ is also a power of $p$.
For $A \in GL(3, \mathbb{F}_q)$,
$A^{\ast}$ denotes the matrix ${}^t\!A^{(\sqrt{q})}$,
where the superscript $(\sqrt{q})$ indicates taking entry-wise
$\sqrt{q}$-th power,
and the superscripted prefix $t$ taking transpose.

The Hermitian curve is a plane curve defined by
\begin{equation}\label{definition_hermitian}
(x^{\sqrt{q}}, y^{\sqrt{q}}, z^{\sqrt{q}}) A \, {}^t\!(x,y,z) 
                                            =0,
\end{equation}
with the condition $A = A^{\ast}$

In this case, we say that $A$ satisfies the Hermitian condition.

Forgetting the Hermitian condition,
the equation (\ref{definition_hermitian}) still defines a nonsingular plane curves of degree $\sqrt{q}+1$ over $\mathbb{F}_q$,
which will be denoted by $C_A$.
We call such a curve a relative of the Hermitian curve, or shortly, a Hermitian-relative curve, and want to study them.

All Hermitian-relative curves, including Hermitian curves themselves, are projectively equivalent each other over 
the algebraic closure $\overline{\mathbb{F}}_q$ of $\mathbb{F}_q$,
which is due to Pardini~\cite[Proof of Proposition~3.7]{par1986}.
We will give a proof of this fact in the next section for the convenience of readers.

Any Hermitian-relative curve $C$ over $\mathbb{F}_q$
has a particular property on the number $N_q(C)$ of
$\mathbb{F}_q$-points:
\[
N_q(C) \equiv 1 \mod \sqrt{q}.
\]
The proof of this fact is also given in the following section.
\gyokan
Since the equation (\ref{definition_hermitian})
is homogeneous, the curve $C_A$ depends only on the image $\overline{A}$ of $A$
under the canonical map $GL(3, \mathbb{F}_q) \to PGL(3, \mathbb{F}_q)$.
By abuse of notation, $\overline{A}$ will be frequently denoted by $A$,
namely an element of $GL(3, \mathbb{F}_q)$
and its image in $PGL(3, \mathbb{F}_q)$ will be used interchangeably.
So the equation $A=B$ in $PGL(3, \mathbb{F}_q)$
means that there exists an element $\lambda \in \mathbb{F}_q^{\ast}$
so that $A=\lambda B$ in $GL(3, \mathbb{F}_q)$.

\begin{remark}
If $\overline{A} \in PGL(3, \mathbb{F}_q)$ satisfies the Hermitian condition,
that is to say, 
$\overline{A^{\ast}} = \overline{A}$,
then we can choose an element $A \in GL(3, \mathbb{F}_q)$
satisfying the Hermitian condition whose image
in $PGL(3, \mathbb{F}_q)$ is the assigned $\overline{A}$.
\end{remark}
\proof
Let $A$ be any source of $\overline{A}$.
Then there is an element $\lambda \in \mathbb{F}_q^{\ast}$
such that $A^{\ast}= \lambda A$.
Taking $\ast$-operation on the both side,
$A = \lambda^{\sqrt{q}} A^{\ast}$.
So $A= \lambda^{\sqrt{q}+1} A$,
which implies $\lambda^{\sqrt{q}+1} =1$.
Therefore there is an element $\rho \in \mathbb{F}_q$
so that $\rho^{1-\sqrt{q}} = \lambda$
(which is a special case of Hilbert's Theorem 90).
Then $\overline{\rho A} = \overline{A}$ and
$\rho A$ satisfies the Hermitian condition.
\qed

The polynomial of type (\ref{definition_hermitian})
is a linear combination of the monomials
\[
x^{\sqrt{q}+1}, y^{\sqrt{q}+1}, z^{\sqrt{q}+1},
x^{\sqrt{q}}y, y^{\sqrt{q}}z, z^{\sqrt{q}}x,
xy^{\sqrt{q}}, yz^{\sqrt{q}}, zx^{\sqrt{q}}
\]
over $\mathbb{F}_q$,
and vice versa.
So those polynomials forms an $\mathbb{F}_q$-vector space
of dimension $9$. Moreover the vector space is stable under the linear transformations on coordinates $x, y, z$.
We should note that
by a linear transformation
\[
\begin{pmatrix}
x \\ y\\ z
\end{pmatrix}
\mapsto
T^{-1}
\begin{pmatrix}
x \\ y\\ z
\end{pmatrix}
\]
with $T \in PGL(3, \mathbb{F}_q )$,
the polynomial
$
(x^{\sqrt{q}}, y^{\sqrt{q}}, z^{\sqrt{q}}) A \begin{pmatrix}
                                             x\\y\\z
                                            \end{pmatrix} 
$
goes to
\[
(x^{\sqrt{q}}, y^{\sqrt{q}}, z^{\sqrt{q}}) T^{\ast}A T\begin{pmatrix}
                                             x\\y\\z
                                            \end{pmatrix} .
\]
In other words, 
two relatives of the Hermitian curve $C_A$ and $C_B$,
are projectively equivalent over $\mathbb{F}_q$
if and only if there is a certain matrix $T \in PGL(3, \mathbb{F}_q)$
such that $B =T^{\ast}A T$ in $PGL(3, \mathbb{F}_q)$.
This is sometimes denoted by $A\sim B$.

\begin{remark}
Even if $\det A =0$, 
the curve defined by (\ref{definition_hermitian}) makes sense; 
however the curve is nonsingular if and only if $\det A \neq 0$.
Actually, the partial derivatives of
$f = (x^{\sqrt{q}}, y^{\sqrt{q}}, z^{\sqrt{q}}) A \begin{pmatrix}
                                             x\\y\\z
                                            \end{pmatrix} $
are given by
\begin{equation}\label{partial_derivatives}
(f_x, f_y, f_z) = (x^{\sqrt{q}}, y^{\sqrt{q}}, z^{\sqrt{q}}) A .
\end{equation}
\end{remark}
The properties mentioned above hold true
after any field extension $\mathbb{F}_q$.
\gyokan
As is explained in the next section,
the tangent line at a point of $C_A$ contacts
of multiplicity $\sqrt{q}$ in general,
but at some special points of multiplicity $\sqrt{q}+1$.
In the letter case, the point is called an inflexion.
In Section~3, we give the classification
up to $PGL(3, \mathbb{F}_q)$
of the curves $C_A$ with two or more inflexions
defined over $\mathbb{F}_q$.
In  the classification,
three types appear and
it can be summarized in Table~1.
\begin{table}[h]
\centering
\caption{Classification}
 \begin{tabular}{c|c|c|c}
    & $N_q(C_A)$ & inflexions & classes \\
  \hline
   (a) & $\sqrt{q}^3 +1$ & $\sqrt{q}^3 +1$ & 1\\
  \hline
  (b) & $\sqrt{q} +1$ & $\sqrt{q}+1$ & $\sqrt{q}$ \\
  \hline
  (c) & $q+1$ & $2$ & $\frac{1}{2}(\sqrt{q}+1)(\sqrt{q}-2)$
 \end{tabular}
\end{table}
In the table, the first column indicates the label of each type which agrees
with the item in Theorem~\ref{maintheorem},
the second the number of $\mathbb{F}_q$-points of $C_A$
belongs the assigned type,
the third the number of inflexions,
and the fourth is the number of $\mathbb{F}_q$-equivalent
classes in each type.

\begin{annotation}
$\bullet$ Although a homogeneous coordinates of a point has an ambiguity by nonzero multiple, we understand all values of the coordinates of
any $\mathbb{F}_q$-point are elements in $\mathbb{F}_q$.
Moreover, we use two words ``$\mathbb{F}_q$-point"
and ``rational point" interchangeably.
\\
$\bullet$ The set of $\mathbb{F}_q$-points of a curve $C$ is denoted by
$C(\mathbb{F}_q)$.
\\
$\bullet$ For a field, the multiplicative group of nonzero elements
is marked by the superscript $\ast$.
\\
$\bullet$ The trace map from $\mathbb{F}_q$ to $\mathbb{F}_{\sqrt{q}}$,
that is,
$
\mathbb{F}_q \ni \alpha \mapsto \alpha +\alpha^{\sqrt{q}} 
\in \mathbb{F}_{\sqrt{q}},
$
and the norm map $\mathbb{F}_q^{\ast}$ to $\mathbb{F}_{\sqrt{q}}^{\ast}$,
that is,
$
\mathbb{F}_q^{\ast} \ni \alpha \mapsto 
\alpha^{\sqrt{q}+1} \in \mathbb{F}_{\sqrt{q}}^{\ast}
$
are simply denoted by $\Tr$ and $\Nm$ respectively.
\\
$\bullet$ Unless we specify the field of definition, 
`projectively equivalent' means `projectively equivalent
over $\mathbb{F}_q$'.
\\
$\bullet$ Sometimes a curve defined by $g=0$ will be indicated by $\{g=0\}$.
\end{annotation}

\section{Basic properties}
We shall start this section with Pardini's observation \cite{par1986},
which is our motivation to study
the Hermitian-relative curve.

\begin{theorem}\label{pardini}
Let $f(x,y,z) \in \mathbb{F}_q [x,y,z]$ be a homogeneous polynomial of
degree $\sqrt{q}+1$.
The curve $C$ defined by $f(x,y,z)=0$ in $\mathbb{P}^2$ is
projectively equivalent over $\overline{\mathbb{F}}_q$
to a Hermitian curve of degree $\sqrt{q}+1$
if and only if $C$ is a Hermitian-relative curve,
that is,
there exists $A \in PGL(3, \mathbb{F}_q)$ such that
\[
f(x,y,z) = (x^{\sqrt{q}}, y^{\sqrt{q}}, z^{\sqrt{q}}) A {}^t\!(x,y,z).
\]
\end{theorem}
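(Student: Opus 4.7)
The plan is to prove the two implications separately.

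For the ``only if'' direction, assume $f(x,y,z) = \lambda \cdot h(T\,{}^t\!(x,y,z))$ for some $T \in GL(3, \overline{\mathbb{F}}_q)$ and $\lambda \in \overline{\mathbb{F}}_q^{\ast}$, where $h(x,y,z) = (x^{\sqrt{q}}, y^{\sqrt{q}}, z^{\sqrt{q}}) B \,{}^t\!(x,y,z)$ is a Hermitian polynomial ($B^{\ast} = B$). The additivity of the $\sqrt{q}$-th power in characteristic $p$ yields
\[
h(T\,{}^t\!(x,y,z)) = (x^{\sqrt{q}}, y^{\sqrt{q}}, z^{\sqrt{q}}) \, T^{\ast} B T \,{}^t\!(x,y,z),
\]
so $f$ takes the Hermitian-relative form with matrix $A := \lambda T^{\ast} B T$, which lies a priori in $GL(3, \overline{\mathbb{F}}_q)$. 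Since the nine monomials $x^{\sqrt{q}+1}, \ldots, zx^{\sqrt{q}}$ are linearly independent and the entries of $A$ are precisely their coefficients in $f$, the hypothesis $f \in \mathbb{F}_q[x,y,z]$ forces $A \in GL(3, \mathbb{F}_q)$.

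For the ``if'' direction, given $A \in GL(3, \mathbb{F}_q)$, I would seek $T \in GL(3, \overline{\mathbb{F}}_q)$ such that $T^{\ast} A T$ is Hermitian; then $C_A$ is projectively equivalent over $\overline{\mathbb{F}}_q$ to the Hermitian curve defined by $T^{\ast} A T$. Expanding $(T^{\ast} A T)^{\ast} = T^{\ast} A T$ using the identities $(XY)^{\ast} = Y^{\ast} X^{\ast}$ and $X^{\ast\ast} = X^{(q)}$, the Hermiticity condition rearranges to
\[
T^{(q)} T^{-1} = A^{-\ast} A.
\]
Since $A^{-\ast} A \in GL(3, \mathbb{F}_q)$ and Lang's theorem applies to the $q$-Frobenius on the connected algebraic group $GL(3, \overline{\mathbb{F}}_q)$ (whose fixed-point subgroup is the finite group $GL(3, \mathbb{F}_q)$), the map $T \mapsto T^{(q)} T^{-1}$ is surjective, and a desired $T$ exists.

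The step I expect to be the main obstacle is recognizing the ``Hermitianization'' problem for $T^{\ast} A T$ as the Lang-style equation above; once in that form the surjectivity is a classical consequence of the étaleness of $T \mapsto T^{(q)} T^{-1}$ (whose differential at the identity is $-\mathrm{id}$ because the $q$-Frobenius has zero derivative in characteristic $p$) combined with the group structure of $GL(3)$. A naive Gram--Schmidt construction of an ``$A$-orthonormal'' basis does not obviously work for non-Hermitian $A$, because the two orthogonality conditions $(u_1^{\sqrt{q}},u_2^{\sqrt{q}},u_3^{\sqrt{q}}) A \,{}^t\!(v_1,v_2,v_3) = 0$ and the same with $u, v$ swapped are independent constraints, over-determining the system at the last column; Lang's theorem cleanly bypasses this.
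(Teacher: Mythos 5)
Your ``only if'' direction is essentially the paper's: expand $h(T\,{}^t\!(x,y,z))$ using additivity of the $\sqrt{q}$-th power map to obtain the matrix $T^{\ast}BT$, and read off rationality of its entries from the coefficients of the nine linearly independent monomials. For the converse the paper follows Pardini and argues quite differently: it considers the morphism $\Phi_A : T \mapsto T^{\ast}AT$ on $PGL(3,\overline{\mathbb{F}}_q)$, observes that its fibres are cosets of the linear automorphism group of $C_A$, which is finite (with a separate remark for the elliptic case $\sqrt{q}=2$), so that by Chevalley's theorem the image is a constructible set of full dimension $8$; two such images must meet inside the irreducible $8$-dimensional group, and taking $B=I_3$ produces the required $T$. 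You instead rewrite the Hermitianization condition $(T^{\ast}AT)^{\ast}=T^{\ast}AT$ as the Lang equation $T^{(q)}T^{-1}=(A^{\ast})^{-1}A$ and invoke Lang's theorem for the connected group $GL_3$ with the $q$-Frobenius. Your computation is correct: $(T^{\ast}AT)^{\ast}=T^{\ast}A^{\ast}T^{(q)}$, so cancelling $T^{\ast}$ gives exactly your equation; and a matrix $B$ with $B^{\ast}=B$ automatically has entries in $\mathbb{F}_q$, since $B=B^{\ast\ast}=B^{(q)}$, so $T^{\ast}AT$ really is a Hermitian matrix in the paper's sense. Your route is shorter and uniform in $q$ --- it needs neither the finiteness of automorphism groups nor any case distinction at $q=4$ --- at the cost of importing Lang's theorem; the paper's argument is self-contained modulo Chevalley's constructibility theorem and the finiteness of the automorphism group, and it directly exhibits equivalence with the standard Fermat-type Hermitian equation $x^{\sqrt{q}+1}+y^{\sqrt{q}+1}+z^{\sqrt{q}+1}=0$ rather than with an unspecified Hermitian matrix.
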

\proof (Pardini)
First suppose that $C$ is projectively equivalent
to 
\[
g(x,y,z)=x^{\sqrt{q}+1}+ y^{\sqrt{q}+1} + z^{\sqrt{q}+1} =0
\]
over $\overline{\mathbb{F}}_q$.
Then there is a matrix
$T = (t_{i,j}) \in GL(3, \overline{\mathbb{F}}_q)$
so that
\[
f(x,y,z) = g((x,y,z){}^tT),
\]
that is,
\begin{align*}
f(x,y,z) &=  
\sum_{j=1}^{3} \left( t_{j1}x + t_{j2}y +t_{j3}z  \right)^{\sqrt{q} +1} \\
&= (x^{\sqrt{q}}, y^{\sqrt{q}}, z^{\sqrt{q}}) T^{\ast} T
                                           \begin{pmatrix}
                                             x\\y\\z
                                            \end{pmatrix}.
\end{align*}
Note that each entry of $ T^{\ast} T$
gives a coefficient of distinct monomial of $f(x,y,z)$.
So $ T^{\ast} T \in PGL(3, \mathbb{F}_q)$.

To see the converse, let us consider a morphism
associated with an assigned $A \in PGL(3, \mathbb{F}_q)$:
\[
\Phi_A : PGL(3, \overline{\mathbb{F}}_q) \ni T
\mapsto  T^{\ast}A T \in PGL(3, \overline{\mathbb{F}}_q).
\]
If $\Phi_A(S) = \Phi_A(T)$ for $S, T \in PGL(3, \overline{\mathbb{F}}_q)$,
then
\[
 (ST^{-1})^{\ast} A (ST^{-1}) =A
\]
which means $ST^{-1}$ gives a linear automorphism of $C_A$
over $\overline{\mathbb{F}}_q$
in $\mathbb{P}^2$.

The automorphism group of $C_A$ over $\overline{\mathbb{F}}_q$
is finite if $\sqrt{q} >2$, because the genus of $C_A$
is grater than $1$.
When $\sqrt{q} =2$, $C_A$ is an elliptic curve over $\overline{\mathbb{F}}_q$
in $\mathbb{P}^2$.
The linear automorphism group of any elliptic curve in $\mathbb{P}^2$
is also finite.
Therefore the image of $\Phi_A$ is a constructible set 
(which is a theorem of Chevalley \cite[Corollary~2 in I \S 8]{mum1999}) 
of dimension $8$
in the target space $ PGL(3, \overline{\mathbb{F}}_q)$.
Therefore two images of $\Phi_A$ and $\Phi_B$ meet in 
$PGL(3, \overline{\mathbb{F}}_q)$ for any two $A, B \in PGL(3, \mathbb{F}_q)$,
that is, there is an element $T \in PGL(3, \overline{\mathbb{F}}_q)$
such that $ T^{\ast}A T  =B$.
In particular, if one takes as $B =I_3$,
one gets a projective transformation over $\overline{\mathbb{F}}_q$
which sends $C_A$ to a Hermitian curve.
\qed

\gyokan

In the next corollary, 
$T_P(C)$ denotes the (embedded) tangent line to a plane curve $C$ at $P$,
and $i(C.T_P(C); P)$ the intersection multiplicity of $C$ and $T_P(C)$ at $P$.
\begin{corollary}\label{characterization}
Suppose that $q >4$.
Among nonsingular plane curves of degree $\sqrt{q}+1$,
a Hermitian-relative curve $C$ is characterized by the property:
for any  $\overline{\mathbb{F}}_q$-point $P$ of $C$,
$i(C.T_P(C); P)$ is
either $\sqrt{q}$ or $\sqrt{q}+ 1$.
\end{corollary}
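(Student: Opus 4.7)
The plan is to split into two implications, each reducing to Pardini's theorem (Theorem~\ref{pardini}) combined with the partial derivative formula~\eqref{partial_derivatives}. First I would handle the ``only if'' direction by assuming $C = C_A$ for some $A \in GL(3,\mathbb{F}_q)$; given $P = (a:b:c) \in C_A$ and $Q = (u:v:w)$ on the tangent $T_P(C_A)$, I would parametrize $T_P$ as $\{P+tQ\}$ and exploit the Frobenius-additivity $(P+tQ)^{(\sqrt{q})} = P^{(\sqrt{q})} + t^{\sqrt{q}} Q^{(\sqrt{q})}$ to expand
\[
f(P+tQ) \;=\; t^{\sqrt{q}} \, Q^{(\sqrt{q})} A \,{}^tP \;+\; t^{\sqrt{q}+1}\, Q^{(\sqrt{q})} A \,{}^tQ,
\]
the $t^0$ and $t^1$ coefficients vanishing because $f(P)=0$ and because $Q \in T_P$ translates into $P^{(\sqrt{q})} A \,{}^tQ = 0$ via~\eqref{partial_derivatives}. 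This pins the contact multiplicity at $\sqrt{q}$ or $\sqrt{q}+1$.

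For the converse, the core step is to show that the Gauss map $\gamma = (f_x:f_y:f_z)\colon C \to \mathbb{P}^{2*}$ factors through the $\sqrt{q}$-th Frobenius $\pi\colon C \to C^{(\sqrt{q})}$. The generic contact order of the tangent with $C$ equals the second St\"ohr--Voloch order $\epsilon_2$ of the embedding; our hypothesis forces $\epsilon_2 \geq \sqrt{q}$, while the standard upper bound for smooth plane curves gives $\epsilon_2 \leq \deg C - 1 = \sqrt{q}$. Hence $\epsilon_2 = \sqrt{q}$, and $\gamma$ is purely inseparable of degree $\sqrt{q}$, so $\gamma = \bar\gamma \circ \pi$ for some separable $\bar\gamma\colon C^{(\sqrt{q})} \to \mathbb{P}^{2*}$. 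Comparing $\gamma^*\mathcal{O}(1) = \mathcal{O}_C(\sqrt{q}) = \pi^*\mathcal{O}_{C^{(\sqrt{q})}}(1)$ yields $\bar\gamma^*\mathcal{O}(1) = \mathcal{O}_{C^{(\sqrt{q})}}(1)$, so $\bar\gamma$ is the restriction of a linear map $\mathbb{P}^2 \to \mathbb{P}^{2*}$, induced by some matrix $A \in GL(3, \overline{\mathbb{F}}_q)$.

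To conclude, I would combine the identity $(f_x:f_y:f_z) = (x^{\sqrt{q}}, y^{\sqrt{q}}, z^{\sqrt{q}}) A$---which is in fact a polynomial identity in $\overline{\mathbb{F}}_q[x,y,z]$, since both sides are forms of degree $\sqrt{q} < \deg C$ and agree on $C$---with Euler's identity in characteristic $p$, namely $f = xf_x + yf_y + zf_z$ (because $\deg f = \sqrt{q}+1 \equiv 1 \pmod p$), giving
\[
f \;=\; (x^{\sqrt{q}}, y^{\sqrt{q}}, z^{\sqrt{q}}) A \,{}^t(x,y,z).
\]
The nine coefficients on the right recover the nine entries of $A$ uniquely, so the fact that $f \in \mathbb{F}_q[x,y,z]$ forces $A \in GL(3,\mathbb{F}_q)$, exhibiting $C = C_A$.

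The hard part will be the Gauss-map factorization, which rests on St\"ohr--Voloch theory; an alternative would be to work directly with Hasse derivatives along tangent directions to conclude that $f_x, f_y, f_z$ are $\sqrt{q}$-th powers of linear forms. The hypothesis $q > 4$ is essential: when $q=4$ every smooth cubic trivially has tangent contact $2$ or $3$ at every point, yet not every smooth cubic is Hermitian-relative (the Hermitian-relative form of a cubic excludes any $xyz$ term).
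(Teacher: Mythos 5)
Your route is genuinely different from the paper's: the paper disposes of the whole corollary in one line by citing Theorems~3.5 and 6.1 of \cite{hom1987} together with Theorem~\ref{pardini}, whereas you attempt a self-contained argument. Your forward direction is complete and correct: the Frobenius expansion of $f(P+tQ)$ kills the $t^0$ and $t^1$ coefficients exactly as you say, B\'ezout caps the contact at $\sqrt{q}+1$, and the two remaining coefficients cannot vanish simultaneously since a smooth curve of degree $>1$ contains no line. This is a clean substitute for the citation (the paper instead gets this direction from Theorem~\ref{pardini} plus the invariance of intersection multiplicities under projective transformations over $\overline{\mathbb{F}}_q$).

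The converse, however, leans on two assertions that are not established, and they are precisely the content of the results the paper cites. First, ``the standard upper bound for smooth plane curves gives $\epsilon_2\le \deg C-1$'' is not a citable fact. The correct way to exclude $\epsilon_2=\sqrt{q}+1$ is the St\"ohr--Voloch $p$-adic criterion: since $\binom{\epsilon_2}{\epsilon_2-1}=\epsilon_2$, any $\epsilon_2\ge 3$ prime to $p$ would force $\epsilon_2-1$ to be an order, which is impossible for a plane curve once $\epsilon_2-1\ge 2$; as $\sqrt{q}+1$ is prime to $p$ and at least $3$, this rules it out. Second, ``$\epsilon_2=\sqrt{q}$, hence $\gamma$ is purely inseparable of degree $\sqrt{q}$'' conflates the generic order with the inseparable degree of the Gauss map and silently sets the separable degree equal to $1$. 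What you actually need is $\deg_i\gamma=\epsilon_2=\sqrt{q}$ (a theorem of Hefez/Homma, available here because $\epsilon_2\ge 3$ when $q>4$); the factorization $\gamma=\bar\gamma\circ\pi$ then follows because the maximal separable subextension of $K(C)/K(\gamma(C))$ must coincide with $K(C)^{\sqrt{q}}$, and pure inseparability comes out at the end once $\bar\gamma$ is shown to be linear with invertible matrix. (Alternatively, $\deg_s\gamma=1$ follows directly from your hypothesis: two distinct points sharing a tangent $\ell$ would give $\deg(C.\ell)\ge 2\sqrt{q}>\sqrt{q}+1$.) Your endgame---descending to a polynomial identity of degree $\sqrt{q}<\deg f$, Euler's identity with $\sqrt{q}+1\equiv 1\pmod p$, and reading off $A\in GL(3,\mathbb{F}_q)$ from the nine coefficients of $f$---is correct. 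Your diagnosis of $q>4$ is right in spirit, though the honest reason a smooth cubic over $\mathbb{F}_4$ can fail to be Hermitian-relative is that it need not be isomorphic to $x^3+y^3+z^3=0$ over $\overline{\mathbb{F}}_4$ (Theorem~\ref{pardini}), not merely the absence of an $xyz$ term from the Hermitian-relative shape.
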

\proof
This characterization comes from \cite[Theorem~6.1, and Theorem~3.5]{hom1987}
together with Theorem~\ref{pardini}
\qed

The following is the main purpose of this section.

\begin{theorem}\label{number_mod_sqrt_q}
Let $C$ be a relative of the Hermitian curve of degree $\sqrt{q}+1$
over $\mathbb{F}_q$. Then
\[
N_q(C) \equiv 1 \mod \sqrt{q}.
\]
In particular, the set $C(\mathbb{F}_q)$ of $\mathbb{F}_q$-points 
is not empty.
\end{theorem}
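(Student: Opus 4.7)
The plan rests on a key algebraic identity that holds for $\mathbb{F}_q$-points. Raising $f(\mathbf{x})=\phi(\mathbf{x})\,A\,{}^t\mathbf{x}$ to the $\sqrt q$-th power and using $F_q=\phi^{2}=\mathrm{id}$ on $\mathbb{F}_q$ gives $f(\mathbf{x})^{\sqrt q}=\phi(\mathbf{x})\,A^{*}\,{}^t\mathbf{x}$ for every $\mathbf{x}\in\mathbb{F}_q^{3}$. Adding,
\[
h(\mathbf{x})\;:=\;f(\mathbf{x})+f(\mathbf{x})^{\sqrt q}\;=\;\phi(\mathbf{x})(A+A^{*})\,{}^t\mathbf{x}
\]
is the Hermitian form of the (automatically Hermitian) matrix $A+A^{*}$, and on $\mathbb{F}_q$-points its value equals $\Tr f(\mathbf{x})$. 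Provided $\det(A+A^{*})\neq 0$, the locus $h=0$ is a nonsingular Hermitian curve $C_{A+A^{*}}$, for which the classical count gives $N_q(C_{A+A^{*}})=q\sqrt q+1\equiv 1\pmod{\sqrt q}$.

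The inclusion $C_A(\mathbb{F}_q)\subseteq C_{A+A^{*}}(\mathbb{F}_q)$ is immediate, and its complement is exactly $\{[\mathbf{x}]\in\mathbb{P}^{2}(\mathbb{F}_q):f(\mathbf{x})\in K\setminus\{0\}\}$, where $K:=\ker\Tr$ is the one-dimensional $\mathbb{F}_{\sqrt q}$-subspace of $\mathbb{F}_q$. Because $f(\lambda\mathbf{x})=\Nm(\lambda)f(\mathbf{x})$ and $K\setminus\{0\}$ is a single coset of $\mathbb{F}_{\sqrt q}^{*}$ in $\mathbb{F}_q^{*}$, this condition descends to projective points, so the theorem reduces to showing that this complementary set has cardinality divisible by $\sqrt q$.

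To obtain the divisibility I would apply the parallel identity to the anti-Hermitian companion $A-A^{*}$: in characteristic not $2$, the rescaled matrix $(A-A^{*})/\alpha$ is Hermitian for any nonzero $\alpha\in K$, so $C_{A-A^{*}}$ is also a Hermitian curve, and a direct check gives $C_A(\mathbb{F}_q)=C_{A+A^{*}}(\mathbb{F}_q)\cap C_{A-A^{*}}(\mathbb{F}_q)$. Their Bezout intersection cycle has degree $(\sqrt q+1)^{2}\equiv 1\pmod{\sqrt q}$, and careful control of the Galois orbit sizes on the non-$\mathbb{F}_q$-rational part of the intersection should then deliver the congruence. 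The main obstacle I anticipate is precisely this orbit-counting step, together with the separate treatment required for characteristic $2$ (where $A+A^{*}=A-A^{*}$) and for the degenerate situations in which either companion is singular. A uniform fallback invokes Pardini's theorem (Theorem~\ref{pardini}) to express $C_A$ as a geometric twist of the $\mathbb{F}_{\sqrt q}$-maximal Hermitian curve and then appeals to the Lefschetz trace formula: since Frobenius on $H^{1}$ of the Hermitian curve is the scalar $-\sqrt q$, Frobenius on $H^{1}(C_A)$ is $-\sqrt q$ times a geometric automorphism with integer trace, so $\Tr(F_q|H^{1})\in\sqrt q\,\mathbb{Z}$ and the result follows at once.
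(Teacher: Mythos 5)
Your first route (splitting $f$ into its Hermitian part $f+f^{\sqrt q}$ and anti-Hermitian part $f-f^{\sqrt q}$) does not close. The identity $f(\mathbf{x})^{\sqrt q}=(x^{\sqrt q},y^{\sqrt q},z^{\sqrt q})A^{\ast}\,{}^t(x,y,z)$ on $\mathbb{F}_q$-points is correct (the paper uses it in the proof of Corollary~\ref{Cor_number_mod_sqrt_q}), and the inclusion $C_A(\mathbb{F}_q)\subseteq C_{A+A^{\ast}}(\mathbb{F}_q)$ is fine, but the step you defer --- showing that the set where $f$ takes nonzero values of trace zero has cardinality divisible by $\sqrt q$ --- is precisely the whole difficulty, and B\'ezout gives no handle on it: the intersection cycle of degree $(\sqrt q+1)^2$ mixes rational points appearing with multiplicities and non-rational Galois orbits of assorted sizes, and nothing forces the non-rational contribution to be $\equiv 0 \pmod{\sqrt q}$. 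On top of that, $A\pm A^{\ast}$ can be singular or even zero (e.g.\ $A-A^{\ast}=0$ for Hermitian $A$, and $A+A^{\ast}=0$ for anti-Hermitian $A$ in odd characteristic), and characteristic $2$ collapses the two companions, as you yourself note. So this branch has a genuine gap, not a removable technicality.

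Your fallback, however, is a complete argument and is in substance the paper's own proof. The paper takes the eigenvalue version: since $C_A$ and the Hermitian curve become isomorphic over some $\mathbb{F}_{q^s}$ (Theorem~\ref{pardini}) and the latter is maximal, the Weil numbers satisfy $\alpha_j^s=(-\sqrt q)^s$, so $\alpha_j=\sqrt q\,\eta_j$ with each $\eta_j$ a root of unity; then $\sum_j\eta_j=(q+1-N_q(C))/\sqrt q$ is simultaneously an algebraic integer and a rational number, hence a rational integer, which is the congruence. Your operator version --- Frobenius on $H^1(C_A)$ is conjugate to $(-\sqrt q)$ times the twisting automorphism, whose trace on $H^1$ is a rational integer --- proves the same thing, but it additionally invokes semisimplicity of Frobenius on $H^1$ of the maximal Hermitian curve (to pass from ``all eigenvalues are $-\sqrt q$'' to ``the operator is the scalar $-\sqrt q$''; without this, $\Tr(c^{-1}F)$ need not factor as $-\sqrt q\,\Tr(c^{-1})$) and the integrality of traces of automorphisms on the Tate module. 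Both facts are standard, but the paper's route through roots of unity is more elementary, needing only Weil's theorem and the fact that a rational algebraic integer is an integer. In short: keep the fallback, discard the decomposition.
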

\proof
Let $g = \frac{1}{2}\sqrt{q}(\sqrt{q}-1)$,
which is the genus of any Hermitian-relative curve.
Let $C'$ be a Hermitian curve of degree $\sqrt{q}+1$,
e.g., $C'$ is one defined by 
$
x^{\sqrt{q}+1} +y^{\sqrt{q}+1} +z^{\sqrt{q}+1} =0.
$
By Theorem~\ref{pardini}, $C$ and $C'$ are projectively
equivalent over some extension field $\mathbb{F}_{q^{s}}$.
Let
\[
Z_C(t) = \prod_{j=1}^{2g}(1-\alpha_j t)/(1-t)(1-qt)
\]
be the zeta function of $C$ over $\mathbb{F}_q$.
By Weil's theorem,
\begin{equation}\label{weil}
N_{q^s}(C) = 1 + q^s - \sum_{j=1}^{2g} \alpha_j^s
\text{\ and \ }
\alpha_j = \sqrt{q}\eta_j
\text{\ with \ }
|\eta_j| =1.
\end{equation}
On the other hand,
since $C'$ is a maximal curve over $\mathbb{F}_q$,
its zeta function is
\[
Z_{C'}(t) = \left(1- (-\sqrt{q})t \right)^{2g}/(1-t)(1-qt).
\]
Since $C$ and $C'$ are isomorphic over $\mathbb{F}_{q^s}$,
\[
\{\alpha_1^s, \dots , \alpha_{2g}^s \}
= \{ \overbrace{(-\sqrt{q})^s, \dots , (-\sqrt{q})^s}^{2g} \}.
\]
(As for this classical theory, one may consult
\cite[Chapter~5]{sti2008}.)

Since we may choose $s$ to be even, $\eta_j^s =1$ for each $j = 1, \dots , 2g$.
In particular, $\eta_j$ is an algebraic integer,
and so is $\sum_{j=1}^{2g} \eta_j$.
Going back to the formula (\ref{weil}) (with $s=1$),
\[
\sum_{j=1}^{2g} \eta_j = \frac{q+1 - N_q(C)}{\sqrt{q}}.
\]
Since $\sqrt{q}$ is a rational integer by our assumption,
$\sum_{j=1}^{2g} \eta_j$ is a rational number.
Therefore it is a rational integer.
Hence
\[
N_q(C) = q+1 -\sqrt{q}(\sum_{j=1}^{2g} \eta_j) \equiv 1 \mod \sqrt{q}.
\]
This completes the proof.
\qed

\begin{corollary}\label{Cor_number_mod_sqrt_q}
For a relative of the Hermitian curve $C$ over $\mathbb{F}_q$,
there is a nonnegative integer $m =m_C$
with
\[
m =q \text{\ or \ } \sqrt{q}+2 \geq m \geq 0
\]
so that
$N_q(C) = m\sqrt{q} +1$.
Moreover
$m_C = q$ if and only if
either
$C$ is Hermitian, or
$q=4$ and
$C$ is projectively equivalent to
the curve
\[
x^{\sqrt{q}+1} + \omega y^{\sqrt{q}+1} +\omega^2 z^{\sqrt{q}+1} =0.
\]
over $\mathbb{F}_4$,
where
$\omega \in \mathbb{F}_4 \setminus \mathbb{F}_2$.
\end{corollary}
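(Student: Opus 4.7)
The first assertion combines Theorem~\ref{number_mod_sqrt_q} with the Weil bound. Writing $N_q(C) = m\sqrt{q} + 1$ with $m \in \mathbb{Z}_{\geq 0}$ (Theorem~\ref{number_mod_sqrt_q}), the estimate $N_q(C) \leq q + 1 + 2g\sqrt{q} = q\sqrt{q} + 1$ with $g = \sqrt{q}(\sqrt{q}-1)/2$ immediately gives $m \leq q$. The substantive task is then to exclude $\sqrt{q}+3 \leq m \leq q-1$ and to identify the extremal case $m = q$.

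My plan for the dichotomy starts from the normalized eigenvalues $\eta_j = \alpha_j / \sqrt{q}$ appearing in the proof of Theorem~\ref{number_mod_sqrt_q}: these are roots of unity, occur in complex-conjugate pairs, satisfy $\sum_j \eta_j \in \mathbb{Z}$, and give $m = \sqrt{q} - \sum_j \eta_j$, with $m=q$ exactly when every $\eta_j = -1$. The first sub-step I would try is to refine the extension degree $s$ (appearing in that proof) down to $2$---that is, to prove that every Hermitian-relative curve is already $\mathbb{F}_{q^2}$-isomorphic to the Hermitian curve. This is an analogue of Theorem~\ref{pardini} over $\mathbb{F}_{q^2}$ in place of $\overline{\mathbb{F}}_q$, and should follow by combining Pardini's orbit-dimension argument with the classification of nondegenerate Hermitian forms on $\mathbb{F}_{q^2}^{3}$ (concretely: surjectivity onto $I_3$ of the orbit map $T \mapsto T^{\ast} A T$ as $T$ ranges over $PGL(3, \mathbb{F}_{q^2})$, for every $A \in PGL(3, \mathbb{F}_q)$). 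Once established, this forces $\eta_j \in \{\pm 1\}$, so $\sum_j \eta_j$ is an integer in $[-2g, 2g]$ of the same parity as $2g$. The step I expect to be hardest is then confining $\sum_j \eta_j$ to $\{-2g\} \cup [-2, 2g]$; most plausibly this requires Honda--Tate, decomposing $J(C)$ as an $\mathbb{F}_q$-twist of a power of a supersingular elliptic curve and reading off the finitely many admissible Frobenius spectra.

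For the characterization of $m_C = q$: an $\mathbb{F}_q$-maximal $C$ of genus $\sqrt{q}(\sqrt{q}-1)/2$ lies on Ihara's boundary, and for $\sqrt{q} \geq 3$ the Rück--Stichtenoth theorem identifies $C$ with the Hermitian curve up to $\mathbb{F}_q$-isomorphism; the rigidity of the degree-$(\sqrt{q}+1)$ plane embedding of the Hermitian curve then upgrades this to projective equivalence over $\mathbb{F}_q$ (any self-isomorphism of the Hermitian curve is induced by a linear automorphism of $\mathbb{P}^2$). For $q = 4$ the genus is $1$ and Rück--Stichtenoth no longer applies, so I would enumerate directly: among the finitely many $\sim$-classes of $A \in PGL(3, \mathbb{F}_4)$ with $C_A$ smooth and $N_4(C_A) = 9$, inspection should reveal exactly two classes, the Hermitian one and the exceptional $x^{3} + \omega y^{3} + \omega^{2} z^{3} = 0$.
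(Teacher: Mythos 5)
Your plan for the main dichotomy ($m_C=q$ or $m_C\le\sqrt{q}+2$) has a genuine gap at exactly the step you flag as hardest, and the tool you propose there cannot close it. Even granting your $\mathbb{F}_{q^2}$-isomorphism claim, reducing to Frobenius eigenvalues $\pm\sqrt{q}$ only tells you that $m_C=\sqrt{q}-(a-b)$, where $a$ (resp.\ $b$) is the number of eigenvalues equal to $+\sqrt{q}$ (resp.\ $-\sqrt{q}$) and $a+b=2g$; the corollary then amounts to the assertion that $b=2g$ or $b\le g+1$. Honda--Tate cannot produce this gap: it classifies isogeny classes of abelian varieties over $\mathbb{F}_q$, and isogeny classes realizing every pair $(a,b)$ with $a+b=2g$ exist; the theorem says nothing about which of them occur as Jacobians of plane curves of the special shape $C_A$. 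There is no purely zeta-function-theoretic obstruction creating the forbidden range $\sqrt{q}+3\le m\le q-1$, so the second half of your outline stalls.

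The paper's argument is elementary and entirely different: if $(x_0,y_0,z_0)\in C_A(\mathbb{F}_q)$, raising the defining equation to the $\sqrt{q}$-th power and using $x_0^q=x_0$, $y_0^q=y_0$, $z_0^q=z_0$ shows $(x_0,y_0,z_0)\in C_{A^{\ast}}$, so $C_A(\mathbb{F}_q)\subseteq C_A\cap C_{A^{\ast}}$. If $C_A$ is not Hermitian, then $C_A$ and $C_{A^{\ast}}$ are distinct irreducible curves of degree $\sqrt{q}+1$, and B\'ezout gives $N_q(C_A)\le(\sqrt{q}+1)^2=(\sqrt{q}+2)\sqrt{q}+1$, i.e.\ $m_C\le\sqrt{q}+2$; for $q=4$ the Weil bound $m\le q$ already coincides with $\sqrt{q}+2$, so nothing more is needed there. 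Your treatment of the extremal case $m_C=q$ (R\"uck--Stichtenoth for $\sqrt{q}\ge 3$, direct enumeration for $q=4$) does match the paper, which simply cites the classical characterizations of maximal curves for that part; but that only identifies the maximal curves, and the exclusion of all intermediate values of $m$ still requires the B\'ezout containment, which is the key idea missing from your proposal.
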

\proof
The additional statement is somewhat classical
\cite{hir-sto-tha-vol1991, ruc-sti1994, hom2024}.
For the first statement,
when $q=4$, there is nothing new from Theorem~\ref{number_mod_sqrt_q}.
Hence we assume that
$q>4$.
Let $C = C_A$, and $(x_0, y_0, z_0) \in C(\mathbb{F}_q)$.
Then
$
(x_0^{\sqrt{q}}, y_0^{\sqrt{q}}, z_0^{\sqrt{q}}) A {}^t\!(x_0,y_0,z_0)
=0.
$
Taking $\sqrt{q}$-th power, and noting $x_0^q= x_0, y_0^q= y_0, z_0^q= z_0$,
we know that
$(x_0, y_0, z_0) \in C_{A^{\ast}}$.
If $C$ is not Hermitian, $C_A$ and $ C_{A^{\ast}}$
are distinct irreducible curves.
Hence the intersection of these two curves consists of at most
$(\sqrt{q}+1)^2$ points, which gives an upper bound for $N_q(C)$.
\qed

\gyokan

Before closing this section, we shall digress a little to explain
a couple of geometric properties of relatives of the Hermitian curve.

\begin{proposition}
For $A \in GL(3, \mathbb{F}_q)$,
the dual curve of $C_A$ is 
$C_{{}^t \! A^{-1}}$.
\end{proposition}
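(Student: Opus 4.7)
The plan is to identify the dual curve of $C_A$ with the image of its Gauss map, and then match that image, by a direct computation using Frobenius, with the defining equation of $C_{{}^t\!A^{-1}}$.

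First I would use the formula $(f_x,f_y,f_z) = (x^{\sqrt{q}},y^{\sqrt{q}},z^{\sqrt{q}})\,A$ from~(\ref{partial_derivatives}) to read off the Gauss map:
\[
\phi \colon C_A \longrightarrow \mathbb{P}^{2\vee}, \qquad (a:b:c) \longmapsto (u:v:w) := (a^{\sqrt{q}},b^{\sqrt{q}},c^{\sqrt{q}})\,A,
\]
so that by definition the dual curve is $\phi(C_A)$. Note that the defining relation $f(a,b,c)=0$ itself becomes simply $ua+vb+wc=0$, since $(a^{\sqrt{q}},b^{\sqrt{q}},c^{\sqrt{q}})A = (u,v,w)$.

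Next I would invert $A$ in this parameterization to obtain $(a^{\sqrt{q}},b^{\sqrt{q}},c^{\sqrt{q}}) = (u,v,w)\,A^{-1}$, then apply Frobenius to $ua+vb+wc=0$ to get
\[
(u^{\sqrt{q}},v^{\sqrt{q}},w^{\sqrt{q}})\,{}^t\!(a^{\sqrt{q}},b^{\sqrt{q}},c^{\sqrt{q}}) = 0,
\]
and substitute to arrive at
\[
(u^{\sqrt{q}},v^{\sqrt{q}},w^{\sqrt{q}})\,{}^t\!A^{-1}\,{}^t\!(u,v,w) = 0,
\]
which is precisely the equation of $C_{{}^t\!A^{-1}}$. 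Hence $\phi(C_A) \subseteq C_{{}^t\!A^{-1}}$.

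To finish, I would note that $\phi$ is nonconstant (the curve $C_A$ has degree $\sqrt{q}+1 \ge 2$), so $\phi(C_A)$ is an irreducible curve, and $C_{{}^t\!A^{-1}}$ is also irreducible (being nonsingular of degree $\sqrt{q}+1$), which forces equality. The only real obstacle is bookkeeping: keeping transposes and $\sqrt{q}$-th powers in their correct places so that the matrix that finally emerges is genuinely ${}^t\!A^{-1}$ rather than a Frobenius-twisted variant. The cleanness of the answer comes precisely from the fact that applying Frobenius only once, combined with the relation $(a^{\sqrt{q}},b^{\sqrt{q}},c^{\sqrt{q}})=(u,v,w)A^{-1}$, lets the twist $A^{(\sqrt{q})}$ cancel before it ever appears.
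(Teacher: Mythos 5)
Your proof is correct and follows essentially the same route as the paper: both take the Gauss map $(u,v,w)=(x^{\sqrt{q}},y^{\sqrt{q}},z^{\sqrt{q}})A$, apply the $\sqrt{q}$-power Frobenius to the defining equation, and substitute $(x^{\sqrt{q}},y^{\sqrt{q}},z^{\sqrt{q}})=(u,v,w)A^{-1}$ to land on $(u^{\sqrt{q}},v^{\sqrt{q}},w^{\sqrt{q}})\,{}^t\!A^{-1}\,{}^t\!(u,v,w)=0$. Your closing irreducibility argument to upgrade the containment $\phi(C_A)\subseteq C_{{}^t\!A^{-1}}$ to equality is a small extra precaution the paper leaves implicit, but it does not change the substance of the argument.
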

\proof
Let $u,v,w$ be the dual coordinate of the dual projective plane
with point-line incidence $ux+vy+wz=0$.
By (\ref{partial_derivatives}),
the equation after eliminating $x, y, z$ from
\begin{equation}\label{dual_equation}
(u,v,w)= (x^{\sqrt{q}}, y^{\sqrt{q}}, z^{\sqrt{q}}) A
\end{equation}
and (\ref{definition_hermitian})
is an equation of the dual curve.
Taking entry-wise $\sqrt{q}$-th power of (\ref{definition_hermitian}),
\[
(x^q, y^q, z^q) A^{(\sqrt{q})} \begin{pmatrix}
                                      x^{\sqrt{q}}\\y^{\sqrt{q}}\\z^{\sqrt{q}}
                                            \end{pmatrix}=0,
\]
which can be written
\[
(u^{\sqrt{q}}, v^{\sqrt{q}}, w^{\sqrt{q}}) {}^t\!A^{-1} \begin{pmatrix}
                                             u\\v\\w
                                            \end{pmatrix} =0
\]
by
(\ref{dual_equation}).
This completes the proof.
\qed

\gyokan

For a Hermitian-relative curve $C_A$, the curve $C_{A^{\ast}}$
is called the mirror curve of $C_A$.
Of course, the mirror curve of $C_{A^{\ast}}$
is the original one $C_A$, and
$C_A$ is Hermitian if and only if $C_{A^{\ast}}=C_A$.

\begin{proposition}
For a mirror pair $(C_A, C_{A^{\ast}})$,
the following properties hold.
\begin{enumerate}[{\rm (1)}]
\item $C_A(\mathbb{F}_q)= C_{A^{\ast}}(\mathbb{F}_q)$.
\item For $P \in C_A(\mathbb{F}_q)= C_{A^{\ast}}(\mathbb{F}_q)$,
$P$ is an inflexion of $C_A$ if and only if
an inflexion of $C_{A^{\ast}}$.
In this case, $T_P(C_A) = T_P(C_A^{\ast})$.
\item For a non-inflexion $P \in C_A(\mathbb{F}_q)$,
then there is a point $P' \in C_A(\mathbb{F}_q)$ so that
$C_A.T_PC_A = \sqrt{q}P + P'$.
In this case, $C_{A^{\ast}} .T_{P'}C_{A^{\ast}}= \sqrt{q}P' + P$.
In particular $P'$ is a non-inflexion of $C_{A^{\ast}}$.
\end{enumerate}
\end{proposition}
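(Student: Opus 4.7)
The plan is to exploit a Frobenius reciprocity between $A$ and $A^\ast$. For (1), the computation already used in the proof of Corollary~\ref{Cor_number_mod_sqrt_q} applies verbatim: if $(x_0, y_0, z_0) \in C_A(\mathbb{F}_q)$, raising the identity $(x_0^{\sqrt{q}}, y_0^{\sqrt{q}}, z_0^{\sqrt{q}}) A\,{}^t(x_0, y_0, z_0) = 0$ to the $\sqrt{q}$-th power, using $x_0^q = x_0$, and transposing the resulting scalar yields $(x_0^{\sqrt{q}}, y_0^{\sqrt{q}}, z_0^{\sqrt{q}}) A^\ast\,{}^t(x_0, y_0, z_0) = 0$, so $P \in C_{A^\ast}(\mathbb{F}_q)$. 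The reverse inclusion follows from $A^{\ast\ast} = A$.

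For (2) and (3) I would introduce, for any point $Q = (x_Q, y_Q, z_Q)$, the Frobenius polar
\[
\ell_{Q, A} := \{(x, y, z) : (x_Q^{\sqrt{q}}, y_Q^{\sqrt{q}}, z_Q^{\sqrt{q}}) A\,{}^t(x, y, z) = 0\},
\]
which by formula~(\ref{partial_derivatives}) coincides with $T_Q(C_A)$ whenever $Q \in C_A$. The same $\sqrt{q}$-th-power trick yields the reciprocity
\[
R \in \ell_{Q, A} \iff Q \in \ell_{R, A^\ast}
\]
for $\mathbb{F}_q$-rational $Q, R$. Writing $v_P = (x_P, y_P, z_P)$ and $v_Q = (x_Q, y_Q, z_Q)$, substituting the parameterization $v_P + t v_Q$ of $T_P(C_A)$ (with $P \in C_A(\mathbb{F}_q)$ and $Q \in T_P(C_A) \setminus \{P\}$) into the defining polynomial produces
\[
f_A(v_P + t v_Q) = t^{\sqrt{q}}\bigl((x_Q^{\sqrt{q}}, y_Q^{\sqrt{q}}, z_Q^{\sqrt{q}}) A\,{}^t v_P\bigr) + t^{\sqrt{q}+1} f_A(v_Q),
\]
so $P$ is an inflexion of $C_A$ if and only if $P \in \ell_{Q, A}$.

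Given this, (3) is immediate: the divisor $C_A \cdot T_P(C_A)$ is $\mathbb{F}_q$-rational, hence so is the residual point $P'$ in $\sqrt{q}P + P'$. The relation $P' \in \ell_{P, A}$ and reciprocity give $P \in \ell_{P', A^\ast} = T_{P'}(C_{A^\ast})$, using (1) to identify the polar with the tangent. By Corollary~\ref{characterization} the divisor $C_{A^\ast} \cdot T_{P'}(C_{A^\ast})$ is either $(\sqrt{q}+1)P'$ or $\sqrt{q}P' + R$; the first option is excluded because $P \neq P'$ (else $P$ would be an inflexion of $C_A$) while $P$ must appear in the divisor, forcing $R = P$.

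For (2), choose an $\mathbb{F}_q$-rational $Q \in T_P(C_A) \setminus \{P\}$, which exists because $T_P(C_A)$ is an $\mathbb{F}_q$-line. If $P$ is an inflexion of $C_A$, the criterion gives $P \in \ell_{Q, A}$ and reciprocity promotes this to $Q \in \ell_{P, A^\ast} = T_P(C_{A^\ast})$; two distinct rational common points $P$ and $Q$ now force $T_P(C_A) = T_P(C_{A^\ast})$. Applying the inflexion criterion to $C_{A^\ast}$ at $P$ with the same $Q$, the required condition $P \in \ell_{Q, A^\ast}$ follows from reciprocity applied to $Q \in T_P(C_A) = \ell_{P, A}$, so $P$ is also an inflexion of $C_{A^\ast}$. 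Swapping $A$ and $A^\ast$ and invoking $A^{\ast\ast} = A$ supplies the converse. I expect the main subtlety to be the asymmetry between the tangent equation (linear in $v_Q$) and the inflexion equation (Frobenius-linear in $v_Q$); the reciprocity is designed precisely to interchange these two roles, and does all the real work once set up.
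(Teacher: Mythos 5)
Your proof is correct. Parts (1) and (3) coincide with the paper's own argument: (1) is the $\sqrt{q}$-th power computation already quoted from the proof of Corollary~\ref{Cor_number_mod_sqrt_q}, and your reciprocity $R \in \ell_{Q,A} \Leftrightarrow Q \in \ell_{R,A^{\ast}}$ for rational $Q,R$ is precisely the manipulation the paper carries out in (3) (raise $(x_0^{\sqrt{q}}, y_0^{\sqrt{q}}, z_0^{\sqrt{q}})A\,{}^t(x_1,y_1,z_1)=0$ to the $\sqrt{q}$-th power and transpose the scalar). The genuine divergence is in part (2): the paper normalizes coordinates so that $P=(0,0,1)$ and $T_P(C_A)=\{y=0\}$, deduces $a_{33}=a_{13}=a_{31}=0$ and $a_{11}\neq 0$ from the inflexion hypothesis, and then simply observes that $A^{\ast}$ has the same shape, so the same conclusion holds for $C_{A^{\ast}}$ with the same tangent line; you instead extract a coordinate-free inflexion criterion ($P$ is an inflexion iff $P\in\ell_{Q,A}$ for one, hence any, rational $Q\neq P$ on the tangent) from the expansion of $f_A(v_P+tv_Q)$, and let the same reciprocity that powers (3) finish the job. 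Your version buys a uniform treatment of (2) and (3) by a single lemma and makes transparent why $A\mapsto A^{\ast}$ exchanges the linear tangency condition with the Frobenius-linear inflexion condition; the paper's matrix computation is more elementary and verifies the preservation of the relevant vanishing conditions directly. The only point you should make explicit is that in your expansion the case where both the coefficient of $t^{\sqrt{q}}$ and $f_A(v_Q)$ vanish cannot occur, since then the whole line would lie on the nonsingular (hence irreducible) curve $C_A$ of degree $\sqrt{q}+1\geq 3$; with that remark the criterion, and the whole argument, is airtight.
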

\begin{proof}
If $C_A$ is Hermitian, then the assertions (1) and (2) are trivially true,
and (3) is nonsense.
So $C_A$ may be assumed to be non-Hermitian.

We already saw the assertion (1) in
the proof of Corollary~\ref{Cor_number_mod_sqrt_q}.

(2) Since $(T^{\ast}AT)^{\ast} = T^{\ast}A^{\ast}T$
for any $T \in PGL(3, \mathbb{F}_q)$,
we may assume that
$P=(0,0,1)$ and $T_PC_A= \{y=0\}$.
Let $A= (a_{ij})$.
Since $P\in C_A$, $a_{33}=0$.
Hence an affine equation of $C_A$ on $\{ z \neq 0\}$
is given by
\[
a_{11}x^{\sqrt{q}+1}
+(a_{12}y + a_{13})x^{\sqrt{q}}
+(a_{21}x + a_{22}y + a_{23})y^{\sqrt{q}}
+a_{31}x +  a_{32}y =0.
\]
Since $T_PC_A = \{y=0 \}$
and the origin $P$ is an inflexion of $C_A$,
we have
$a_{31} =0$, $a_{13} =0$ and $a_{11}\neq 0$.
Hence we may assume that $a_{11}=1$.
To sum up,
\[
A=
\begin{pmatrix}
1 & a_{12} & 0\\
a_{21}& a_{22}&a_{23}\\
0 & a_{32}&0
\end{pmatrix}.
\]
Hence
\[
A^{\ast}=
\begin{pmatrix}
1 & a_{21}^{\sqrt{q}} & 0\\
a_{12}^{\sqrt{q}}& a_{22}^{\sqrt{q}}&a_{32}^{\sqrt{q}}\\
0 & a_{23}^{\sqrt{q}}&0
\end{pmatrix},
\]
in other words, an affine equation of $C_{A^{\ast}}$ is
\[
x^{\sqrt{q}+1} 
+a_{21}^{\sqrt{q}}y x^{\sqrt{q}}
+(a_{12}^{\sqrt{q}}x + a_{22}^{\sqrt{q}}y + a_{32}^{\sqrt{q}})y^{\sqrt{q}}
+  a_{32}^{\sqrt{q}}y =0.
\]
This shows that the origin is also an inflexion of $C_{A^{\ast}}$
with tangent line $\{ y=0\}$.

(3) By Corollary~\ref{characterization},
$C_A.T_PC_A = \sqrt{q}P + P'$ for some point $P'$.
Since two divisors $C_A.T_PC_A$ and $\sqrt{q}P$
are defined over $\mathbb{F}_q$,
so is the residual one $P'$.
Hence $P' \in C_A(\mathbb{F}_q) = C_{A^{\ast}}(\mathbb{F}_q)$.
Let $P= (x_0, y_0, z_0)$
and $P'= (x_1, y_1, z_1)$.
Then
\[
(x_0^{\sqrt{q}}, y_0^{\sqrt{q}}, z_0^{\sqrt{q}})
A {}^t \!(x_1, y_1, z_1)
\]
by Remark~\ref{partial_derivatives}.
Hence
\[
(x_1^{\sqrt{q}}, y_1^{\sqrt{q}}, z_1^{\sqrt{q}})
A^{\ast} {}^t \!(x_0, y_0, z_0),
\]
which means
$P \in C_{A^{\ast}}. T_{P'} C_{A^{\ast}}$,
and hence
$C_{A^{\ast}}. T_{P'} C_{A^{\ast}} = \sqrt{q}P' + P.$
\end{proof}

\section{Hermitian-relative curves with two or more rational inflexions}
The purpose of this section is to give the classification of
the Hermitian-relative curves having two or more rational inflexions,
which is summarized in the following theorem.

\begin{theorem}\label{maintheorem}
Let $C$ be a relative of the Hermitian curve.
Suppose that $C$ has at least two rational
inflexions.
Then $C$ is projectively equivalent to
\begin{equation}\label{at_least_two_inflexions}
x^{\sqrt{q}}y +\omega xy^{\sqrt{q}} + z^{\sqrt{q}+1} =0
\end{equation}
with $\omega \in \mathbb{F}_q^{\ast}$.
Conversely a curve defined by $(\ref{at_least_two_inflexions})$
has at least two rational inflexions.
\begin{enumerate}[{\rm (a)}]
\item If $\omega =1$, then the curve $(\ref{at_least_two_inflexions})$
has $\sqrt{q}^3 +1$ rational points, and all of them are inflexions;
and it is Hermitian.
Therefore a Hermitian-relative curve with $\sqrt{q}^3 +1$ rational inflexions
is unique up to projective equivalence.
\item If $\Nm \omega =1$ and $\omega \neq 1$,
then the curve $(\ref{at_least_two_inflexions})$
has $\sqrt{q}+1$ rational points, and all of them are inflexions.
In this case, the curve is
projectively equivalent to
\begin{equation}\label{diagonaloftype1}
x^{\sqrt{q}+1} + y^{\sqrt{q}+1} + \eta z^{\sqrt{q}+1} =0
\end{equation}
with a certain $\eta \in \mathbb{F}_q^{\ast} 
\setminus \mathbb{F}_{\sqrt{q}}^{\ast}.$
Conversely, a curve $(\ref{diagonaloftype1})$
with $\eta \in \mathbb{F}_q^{\ast} 
\setminus \mathbb{F}_{\sqrt{q}}^{\ast}$
is projectively equivalent to
a curve $(\ref{at_least_two_inflexions})$
for a certain $\omega$ with $\Nm \omega =1$ and $\omega \neq 1$.

For two curves
$x^{\sqrt{q}+1} + y^{\sqrt{q}+1} + \eta z^{\sqrt{q}+1} =0$
and
$x^{\sqrt{q}+1} + y^{\sqrt{q}+1} + \eta' z^{\sqrt{q}+1} =0$
with $\eta, \, \eta' \in \mathbb{F}_q^{\ast}
 \setminus \mathbb{F}_{\sqrt{q}}^{\ast}.$
are projectively equivalent to each other if and only if 
$\eta \eta'^{-1} \in \mathbb{F}_{\sqrt{q}}^{\ast}$.
Therefore, there are
exactly $\sqrt{q}$ non-equivalent Hermitian-relative curves,
each of which has
$\sqrt{q}+1$ rational points
and all of them are inflexions.
\item If $\Nm \omega \neq 1$, then the curve $(\ref{at_least_two_inflexions})$
has $q+1$ rational points, and only two of them are inflexions.
For two curves $x^{\sqrt{q}}y +\omega xy^{\sqrt{q}} + z^{\sqrt{q}+1} =0$
and $x^{\sqrt{q}}y +\omega' xy^{\sqrt{q}} + z^{\sqrt{q}+1} =0$
with $\omega, \, \omega' \in \mathbb{F}_q^{\ast} \setminus \Ker( \Nm )$
are projectively equivalent if and only if
$\omega' = \omega\ \text{or}\ \omega^{-\sqrt{q}}$.
Therefore, there are exactly $\frac{1}{2}(\sqrt{q}+1)(\sqrt{q}-2)$
non-equivalent Hermitian-relative curves with just two rational inflexions.
\end{enumerate}
\end{theorem}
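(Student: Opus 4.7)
The plan is, starting from two rational inflexions on $C$, to reduce $C$ to a one-parameter canonical form and then dissect three regimes of the parameter.

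First I would reduce to a canonical form. Let $P_1,P_2$ be two rational inflexions with tangent lines $L_i:=T_{P_i}C$. Because each $L_i$ meets $C$ only at $P_i$ with multiplicity $\sqrt{q}+1$, the point $P_2$ is not on $L_1$ and $P_1$ is not on $L_2$; hence $L_1\neq L_2$ and $P_3:=L_1\cap L_2$ is a third point, off $C$. An $\mathbb{F}_q$-projective change of coordinates puts $P_1=(1,0,0)$, $P_2=(0,1,0)$, $P_3=(0,0,1)$, so that $L_1=\{y=0\}$ and $L_2=\{x=0\}$. Writing $A=(a_{ij})$, the containments $P_i\in C$ force $a_{11}=a_{22}=0$, the tangent-line conditions force $a_{13}=a_{23}=0$, and the inflexion conditions $C\cdot L_i=(\sqrt{q}+1)P_i$ force $a_{31}=a_{32}=0$ so that the restrictions of $f$ to $L_1,L_2$ become pure powers. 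Rescaling fixes $a_{12}=a_{33}=1$, leaving $f=x^{\sqrt{q}}y+\omega xy^{\sqrt{q}}+z^{\sqrt{q}+1}$ with $\omega:=a_{21}\in\mathbb{F}_q^{*}$, which is form (\ref{at_least_two_inflexions}).

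Next I would count rational points and rational inflexions in terms of $\omega$. On $\{z=0\}$ the form factors as $xy(x^{\sqrt{q}-1}+\omega y^{\sqrt{q}-1})$, contributing $P_1,P_2$ together with $\sqrt{q}-1$ additional points precisely when $-\omega$ is a $(\sqrt{q}-1)$-th power in $\mathbb{F}_q^{*}$, equivalently $\Nm\omega=1$. On the affine chart $\{z=1\}$, the substitution $x=ay$ turns the equation into $(a^{\sqrt{q}}+\omega a)y^{\sqrt{q}+1}=-1$; a Hilbert-90 computation shows that $a^{\sqrt{q}}+\omega a$ lies in $\mathbb{F}_{\sqrt{q}}$ on a single coset of $\mathbb{F}_{\sqrt{q}}^{*}$ in $\mathbb{F}_q^{*}$ and vanishes identically there iff $\Nm\omega=1$, so one gets $(\sqrt{q}-1)(\sqrt{q}+1)=q-1$ points exactly when $\Nm\omega\neq 1$. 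The totals $q\sqrt{q}+1$, $\sqrt{q}+1$, $q+1$ then match the three cases. For inflexions when $\omega\neq 1$, I would expand $f(Q+tv)$ in $t$ using Frobenius; all intermediate powers between $t^2$ and $t^{\sqrt{q}-1}$ drop out, and the coefficient of $t^{\sqrt{q}}$, evaluated on the tangent direction with $z_0=1$, reduces to $s^{\sqrt{q}}y_0(1-\omega)^{\sqrt{q}-1}=u^{\sqrt{q}}x_0$ for \emph{all} $(s,u)$, which forces $x_0y_0=0$ and hence $Q\notin C$; thus rational inflexions lie only on $\{z=0\}$. Combined with the $\{z=0\}$ count, case (b) has all $\sqrt{q}+1$ rational points as inflexions, while case (c) has exactly $P_1,P_2$.

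Finally I would classify up to $\mathbb{F}_q$-equivalence. Case (a), $\omega=1$: $A=A^{*}$, so $C$ is Hermitian, and classical results supply all $q\sqrt{q}+1$ points as inflexions and uniqueness of the class. Case (b), $\Nm\omega=1$, $\omega\neq 1$: the substitution $x=u+v$, $y=u-\omega v$ (nondegenerate when $\omega\neq -1$, the odd-characteristic $\omega=-1$ subcase handled by a modified substitution) diagonalises the binary part because the $uv^{\sqrt{q}}$ cross-term has coefficient $1-\Nm\omega$; after rescaling (in odd characteristic using that $-1\in\mathbb{F}_{\sqrt{q}}^{*}$ is a $(\sqrt{q}+1)$-th power) one obtains $u^{\sqrt{q}+1}+v^{\sqrt{q}+1}+\eta z^{\sqrt{q}+1}$ with $\eta=(1+\omega)^{-1}\notin\mathbb{F}_{\sqrt{q}}^{*}$. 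For two such diagonal forms, any $\mathbb{F}_q$-equivalence must permute the $\sqrt{q}+1$ rational points (all on $\{z=0\}$), which pins the transformation down to the diagonal torus together with coordinate permutations; a direct calculation yields $\eta\sim\eta'$ iff $\eta\eta'^{-1}\in\mathbb{F}_{\sqrt{q}}^{*}$, giving $|\mathbb{F}_q^{*}\setminus\mathbb{F}_{\sqrt{q}}^{*}|/|\mathbb{F}_{\sqrt{q}}^{*}|=\sqrt{q}$ classes. Case (c), $\Nm\omega\neq 1$: any equivalence must permute $\{P_1,P_2\}$, so modulo the diagonal stabiliser (which fixes $\omega$) it reduces to the swap $(x,y,z)\mapsto (y,\lambda x,z)$; choosing $\lambda$ with $\omega\lambda^{\sqrt{q}}=1$ restores the canonical form and yields $\omega\mapsto\omega^{-\sqrt{q}}$. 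Since $\omega=\omega^{-\sqrt{q}}$ forces $\Nm\omega=1$, this involution is fixed-point-free on $\mathbb{F}_q^{*}\setminus\Ker(\Nm)$, producing $\tfrac{1}{2}(\sqrt{q}+1)(\sqrt{q}-2)$ classes.

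The principal obstacle is the inflexion count via the Frobenius expansion—proving both necessity and sufficiency of the vanishing condition in cases (b) and (c)—together with the verification in case (b) that no exotic $\mathbb{F}_q$-equivalence between two diagonal forms can exist beyond those from the diagonal torus and coordinate permutations; once these are in hand, the bookkeeping for the orbit counts is routine.
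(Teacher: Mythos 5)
Your overall skeleton matches the paper's (normalize two inflexions to $(1,0,0),(0,1,0)$ with coordinate tangent lines, kill six entries of $A$, rescale to the one-parameter family $C_\omega$, then split on $\Nm\omega$), and several of your computations are valid alternatives to the paper's: the affine count via $x=ay$ reaching $(a^{\sqrt{q}}+\omega a)y^{\sqrt{q}+1}=-1$ is a legitimate substitute for the paper's substitution $T=xy^{\sqrt{q}}$ plus its Proposition on $X^{\sqrt{q}}+\alpha X+\beta=0$, and your Frobenius expansion of $f(Q+tv)$ correctly shows that for $\omega\neq 1$ no affine rational point is an inflexion (the coefficient of $t^{\sqrt{q}}$ is $\sqrt{q}$-semilinear and vanishes at $Q$, so its vanishing at one point of the tangent line forces it on the whole line, whence $x_0=y_0=0$). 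But there are two genuine gaps. First, in case (b) the sentence ``combined with the $\{z=0\}$ count, case (b) has all $\sqrt{q}+1$ rational points as inflexions'' is a non sequitur: knowing that rational inflexions can only lie on $\{z=0\}$ does not show that the points on $\{z=0\}$ \emph{are} inflexions. You need the paper's short argument (if $R$ on $\{z=0\}$ were not an inflexion, the residual point $R'$ of $C.T_R(C)=\sqrt{q}R+R'$ would be rational, hence on $\{z=0\}$, forcing $T_R(C)=\{z=0\}$, impossible since that line meets $C$ in $\sqrt{q}+1$ distinct points), or a direct check that $f$ restricted to the tangent $x=\zeta y$ at $(\zeta,1,0)$ equals $z^{\sqrt{q}+1}$.

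Second, and more seriously, your classification in case (b) rests on the claim that an equivalence between two diagonal forms is ``pinned down to the diagonal torus together with coordinate permutations'' by its action on the $\sqrt{q}+1$ rational points. That is false: those points only force the transformation to stabilize the line $\{z=0\}$, and the upper-left $2\times 2$ block can then be any similitude of the binary form $x^{\sqrt{q}+1}+y^{\sqrt{q}+1}$ --- a unitary similitude group far larger than torus plus permutations (the matrix $\left(\begin{smallmatrix}u&\alpha u^{\sqrt{q}}\\ 1&-\alpha\end{smallmatrix}\right)$ of the paper's Lemma~\ref{b_to_diagonal} is already an example outside your list). The paper's route around this is the key device you are missing: with $a_{31}=a_{32}=0$, comparing the $(1,1)$-entries of $T^{\ast}\diag[1,1,\eta]T=\rho\diag[1,1,\eta']$ gives $\rho=a_{11}^{\sqrt{q}+1}+a_{21}^{\sqrt{q}+1}\in\mathbb{F}_{\sqrt{q}}$, and taking determinants gives $\eta'\eta^{-1}=\Nm(\det T)\,\rho^{-3}\in\mathbb{F}_{\sqrt{q}}^{\ast}$, with no need to determine $T$ at all. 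Two smaller points: your diagonalization $x=u+v$, $y=u-\omega v$ leaves the odd-characteristic subcase $\omega=-1$ (which does satisfy $\Nm\omega=1$) unhandled rather than handled, whereas the paper's Lemma~\ref{b_to_diagonal} is uniform; and after normalizing $a_{33}=1$ and rescaling $y$, the parameter is $\omega=a_{21}a_{12}^{-\sqrt{q}}$, not $a_{21}$. Your case (c) classification (the stabilizer of $\{P,Q\}$ and of $(0,0,1)$ really is diagonal-or-antidiagonal, and the swap sends $\omega$ to $\omega^{-\sqrt{q}}$, fixed-point-freely off $\Ker(\Nm)$) is sound and slightly more direct than the paper's entrywise elimination.
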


\subsection{Some equations}
Since any trace map and norm map between finite fields
are surjective,
it is easy to show Hilbert's Theorem 90 in our case:

\begin{lemma}\label{lemma_hilbert90}
There are two exact sequences{\rm :}
\begin{equation}\label{hilbert90_addtive}
\begin{array}{rcccccccl}
0\to & \mathbb{F}_{\sqrt{q}} & \stackrel{i}{\to}&  \mathbb{F}_q &
 \to & \mathbb{F}_q & 
 \xrightarrow{\Tr} & \mathbb{F}_{\sqrt{q}} &\to 0 \\
     &  &  &     \alpha &\mapsto &\alpha -\alpha^{\sqrt{q}}  &
     & &
\end{array}
\end{equation}
\begin{equation}\label{hilbert90_multiplicative}
\begin{array}{rcccccccl}
1\to & \mathbb{F}_{\sqrt{q}}^{\ast} & \stackrel{i}{\to}&  \mathbb{F}_q^{\ast} &
 \to & \mathbb{F}_q^{\ast} & 
 \xrightarrow{\Nm} & \mathbb{F}_{\sqrt{q}}^{\ast} &\to 1 \\
     &  &  &     \alpha &\mapsto &\alpha^{1-\sqrt{q}}&
     & &
\end{array},
\end{equation}
where $i$ means natural inclusion.
\end{lemma}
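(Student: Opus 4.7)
The plan is to verify each sequence by checking exactness at every spot, using cardinality arguments to upgrade the easy ``composition is zero'' inclusions into equalities.

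For the additive sequence, I would first handle the two endpoints: the inclusion $i$ is obviously injective, and the trace $\Tr$ is surjective because it is a nonzero $\mathbb{F}_{\sqrt{q}}$-linear form on the two-dimensional $\mathbb{F}_{\sqrt{q}}$-space $\mathbb{F}_q$. Next, the kernel of $\varphi : \alpha \mapsto \alpha - \alpha^{\sqrt{q}}$ consists precisely of fixed points of the Frobenius generator of $\mathrm{Gal}(\mathbb{F}_q/\mathbb{F}_{\sqrt{q}})$, which is $\mathbb{F}_{\sqrt{q}}$, giving exactness at the middle-left spot. For the middle-right spot, a direct computation gives $\Tr(\alpha-\alpha^{\sqrt{q}})=(\alpha-\alpha^{\sqrt{q}})+(\alpha^{\sqrt{q}}-\alpha^q)=0$, so $\image \varphi \subseteq \Ker \Tr$. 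Since $|\Ker \varphi|=\sqrt{q}$ forces $|\image \varphi|=q/\sqrt{q}=\sqrt{q}$ and surjectivity of $\Tr$ forces $|\Ker \Tr|=\sqrt{q}$, the inclusion is an equality.

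For the multiplicative sequence, I would follow exactly the same template. The inclusion $i$ is injective. Surjectivity of $\Nm$ is immediate because $\mathbb{F}_q^{\ast}$ is cyclic of order $q-1=(\sqrt{q}-1)(\sqrt{q}+1)$, so the image of $\alpha \mapsto \alpha^{\sqrt{q}+1}$ is the unique subgroup of order $\sqrt{q}-1$, which is $\mathbb{F}_{\sqrt{q}}^{\ast}$. The kernel of $\psi : \alpha \mapsto \alpha^{1-\sqrt{q}}$ is $\{\alpha : \alpha^{\sqrt{q}-1}=1\}=\mathbb{F}_{\sqrt{q}}^{\ast}$, matching $\image i$. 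Finally, $\Nm(\psi(\alpha))=\alpha^{(1-\sqrt{q})(\sqrt{q}+1)}=\alpha^{1-q}=1$, so $\image \psi \subseteq \Ker \Nm$, and counting gives $|\image \psi|=(q-1)/(\sqrt{q}-1)=\sqrt{q}+1=|\Ker \Nm|$, yielding equality.

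There is no real obstacle here; the result is a packaging of two standard facts (the kernel of Frobenius-minus-identity is the fixed field, and the norm/trace are surjective onto the subfield), and every step reduces either to a one-line algebraic identity or to counting the orders of cyclic-group endomorphisms. The only minor care needed is to make sure that the counting on the ``$\image \subseteq \Ker$'' direction is performed correctly, and that surjectivity of $\Tr$ is invoked (rather than assumed) before one concludes $|\Ker \Tr|=\sqrt{q}$.
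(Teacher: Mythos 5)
Your proof is correct and complete; every exactness check (injectivity of $i$, kernels equal to $\mathbb{F}_{\sqrt{q}}$ resp.\ $\mathbb{F}_{\sqrt{q}}^{\ast}$, the composite-is-trivial identities, and the cardinality counts upgrading inclusions to equalities) is carried out properly. The paper does not write out a proof at all --- it merely notes that the lemma follows easily from the surjectivity of the trace and norm maps between finite fields --- and your argument is precisely the standard counting argument being alluded to, so there is nothing to add.
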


\begin{corollary}\label{corollary_hilbert90}
\begin{enumerate}[{\rm (i)}]
\item The equation of $X$ with $\beta \in \mathbb{F}_q$
\begin{equation}\label{additive_eq}
X^{\sqrt{q}} -X -\beta =0
\end{equation}
has a solution in $\mathbb{F}_q$
if and only if $\Tr \beta=0$.
In this case, all roots of $(\ref{additive_eq})$ are in $\mathbb{F}_q$.
\item The equation of $X$ with $\beta \in \mathbb{F}_q^{\ast}$
\begin{equation}\label{multiplicative_eq}
X^{\sqrt{q}-1} - \beta =0
\end{equation}
has a solution in $\mathbb{F}_q^{\ast}$
if and only if $\Nm \beta=1$.
In this case, all roots of $(\ref{multiplicative_eq})$ are in $\mathbb{F}_q^{\ast}$.
\end{enumerate}
\end{corollary}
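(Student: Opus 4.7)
The plan is to read off both claims directly from the exact sequences \eqref{hilbert90_addtive} and \eqref{hilbert90_multiplicative} of Lemma~\ref{lemma_hilbert90}. Existence of a solution is precisely exactness at the third term (image of the Artin--Schreier/Kummer-type map equals kernel of $\Tr$ or $\Nm$); the ``all roots are in $\mathbb{F}_q$'' assertions come from exactness at the second term, which identifies the group of shifts/ratios relating any two solutions with $\mathbb{F}_{\sqrt{q}}$ or $\mathbb{F}_{\sqrt{q}}^{\ast}$.

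For part (i), I would rewrite $X^{\sqrt{q}} - X - \beta = 0$ as $X - X^{\sqrt{q}} = -\beta$. By exactness of \eqref{hilbert90_addtive} at the third term, $-\beta$ lies in the image of $\alpha \mapsto \alpha - \alpha^{\sqrt{q}}$ if and only if $\Tr(-\beta) = 0$, which is equivalent to $\Tr\beta = 0$ by additivity. For the second assertion, if $X_0 \in \mathbb{F}_q$ is a root and $c \in \mathbb{F}_{\sqrt{q}}$, then
\[
(X_0 + c)^{\sqrt{q}} - (X_0 + c) - \beta \;=\; (X_0^{\sqrt{q}} - X_0 - \beta) + (c^{\sqrt{q}} - c) \;=\; 0,
\]
so $X_0 + c$ is again a root. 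Since $|\mathbb{F}_{\sqrt{q}}| = \sqrt{q}$ equals the degree of the polynomial, these $\sqrt{q}$ roots are all of them, and they all lie in $\mathbb{F}_q$.

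For part (ii), I would reformulate $X^{\sqrt{q}-1} = \beta$ as $(X^{-1})^{1-\sqrt{q}} = \beta$; by exactness of \eqref{hilbert90_multiplicative} at the third term, this has a solution in $\mathbb{F}_q^{\ast}$ exactly when $\Nm\beta = 1$. (One direction is also transparent from $\Nm(X^{\sqrt{q}-1}) = X^{q-1} = 1$.) For the ``all roots'' statement, if $X_0$ is one solution, any other solution $X$ satisfies $(X/X_0)^{\sqrt{q}-1} = 1$, so $X/X_0$ belongs to the unique subgroup of $\mathbb{F}_q^{\ast}$ of order $\sqrt{q}-1$, namely $\mathbb{F}_{\sqrt{q}}^{\ast}$; hence $X \in \mathbb{F}_q^{\ast}$, and a count gives all $\sqrt{q}-1$ roots. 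There is no substantive obstacle here; the corollary is essentially a translation of the lemma, and the only point worth being careful about is checking that the sets of differences and ratios of solutions exhaust $\mathbb{F}_{\sqrt{q}}$ and $\mathbb{F}_{\sqrt{q}}^{\ast}$ respectively, which the degree counts confirm.
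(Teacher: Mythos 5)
Your proposal is correct and follows essentially the same route as the paper: both directions of each part are read off from the exact sequences of Lemma~\ref{lemma_hilbert90}, and the ``all roots'' assertions are obtained by observing that solutions differ by elements of $\mathbb{F}_{\sqrt{q}}$ (resp.\ $\mathbb{F}_{\sqrt{q}}^{\ast}$) and counting against the degree. Your write-up of part (ii) is in fact more detailed than the paper's, which simply asserts it follows from the multiplicative sequence.
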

\begin{proof}
(i)
Let $\alpha \in \mathbb{F}_q$ be a root of (\ref{additive_eq}).
Then $\beta = \alpha^{\sqrt{q}} -\alpha$. Hence $\Tr \beta =0$.
Conversely,
if $\Tr \beta =0$, there is an element $\alpha \in \mathbb{F}_q$ such that
$\alpha^{\sqrt{q}} -\alpha =\beta$
by (\ref{hilbert90_addtive}).
When $\alpha$ is a root of (\ref{additive_eq}),
the set of all roots is 
$\{ \alpha + \gamma | \gamma \in \mathbb{F}_{\sqrt{q}}\}$.

(ii) can be proved by using (\ref{hilbert90_multiplicative}).
\end{proof}

The above corollary can be generalized slightly as follows:

\begin{proposition}\label{equation_new}
Let $\alpha \in \mathbb{F}_q^{\ast}$ and $\beta \in \mathbb{F}_q$,
and consider the equation in $X$:
\begin{equation}\label{unifiedequation}
X^{\sqrt{q}} + \alpha X + \beta =0.
\end{equation}
\begin{enumerate}[{\rm(i)}]
 \item When $\Nm \alpha =1$, 
$(\ref{unifiedequation})$ has a solution in $\mathbb{F}_q$ if and only if
$\beta =0$ or $\alpha = \beta^{1-\sqrt{q}}$.
In this case, all roots of $(\ref{unifiedequation})$
are $\mathbb{F}_q$-solutions.
 \item If $\Nm \alpha \neq 1$, then $(\ref{unifiedequation})$
 has a unique $\mathbb{F}_q$-solution.
\end{enumerate}
\end{proposition}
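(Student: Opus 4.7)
The plan is to view $L : \mathbb{F}_q \to \mathbb{F}_q$, $L(X) = X^{\sqrt{q}} + \alpha X$, as an $\mathbb{F}_{\sqrt{q}}$-linear endomorphism of the $2$-dimensional $\mathbb{F}_{\sqrt{q}}$-vector space $\mathbb{F}_q$, so that $(\ref{unifiedequation})$ becomes $L(X) = -\beta$, and the problem reduces to analyzing $\ker L$ and $\image L$. A nonzero element of $\ker L$ is a solution of $X^{\sqrt{q}-1} = -\alpha$, which by Corollary~\ref{corollary_hilbert90}~(ii) exists in $\mathbb{F}_q^{\ast}$ if and only if $\Nm(-\alpha) = 1$. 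Since $(-1)^{\sqrt{q}+1} = 1$ (automatic in characteristic $2$, and true in odd characteristic because $\sqrt{q}+1$ is even), this is equivalent to $\Nm \alpha = 1$. Thus $|\ker L| = \sqrt{q}$ in case (i) and $\ker L = \{0\}$ in case (ii). The latter immediately settles case (ii): $L$ is a bijection on $\mathbb{F}_q$, so $L(X) = -\beta$ has a unique $\mathbb{F}_q$-solution for every $\beta$.

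For case (i), I would first extract the stated condition as a necessary condition on $\beta$. Given $X_0 \in \mathbb{F}_q$ with $L(X_0) = -\beta$, apply Frobenius to the equation, use $X_0^{q} = X_0$, and substitute $X_0^{\sqrt{q}} = -\alpha X_0 - \beta$; the result is
\[
X_0(1 - \Nm\alpha) = \alpha^{\sqrt{q}}\beta - \beta^{\sqrt{q}}.
\]
In case (i) the left-hand side vanishes, so $\alpha^{\sqrt{q}}\beta = \beta^{\sqrt{q}}$; dividing by $\beta$ (when $\beta \neq 0$), raising to the $\sqrt{q}$-th power, and using $\alpha^{q} = \alpha$ and $\beta^{q}=\beta$ then yields $\alpha = \beta^{1-\sqrt{q}}$.

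For sufficiency, $\beta = 0$ clearly gives $X = 0$. Assuming $\alpha\beta^{\sqrt{q}} = \beta$ with $\beta \neq 0$, I would try the ansatz $X_0 = c\,\beta^{\sqrt{q}}$ with $c \in \mathbb{F}_q$; a direct calculation gives $L(X_0) = (c^{\sqrt{q}} + c)\beta = (\Tr c)\,\beta$, so the requirement $L(X_0) = -\beta$ reduces to $\Tr c = -1$, and such a $c$ exists by the surjectivity of $\Tr$ in Lemma~\ref{lemma_hilbert90}. Once one $\mathbb{F}_q$-solution $X_0$ is exhibited, the full solution set $X_0 + \ker L$ lies in $\mathbb{F}_q$, so all $\sqrt{q}$ roots of $(\ref{unifiedequation})$ belong to $\mathbb{F}_q$. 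The main obstacle is making this sufficiency step work uniformly in the characteristic: a naive guess like $X_0 = -\beta^{\sqrt{q}}/2$ (which reduces to $2c = -1$) breaks down in characteristic $2$, while the trace-based ansatz $c\,\beta^{\sqrt{q}}$ handles both cases at once.
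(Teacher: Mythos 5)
Your proposal is correct and follows essentially the same route as the paper: both treat $X \mapsto X^{\sqrt{q}}+\alpha X$ as an $\mathbb{F}_{\sqrt{q}}$-linear map, identify the kernel via Hilbert~90 (Corollary~\ref{corollary_hilbert90}(ii)), settle case (ii) by bijectivity, and in case (i) reduce existence of a particular solution to surjectivity of the trace. The only (harmless) differences are cosmetic: you derive the necessity of $\alpha=\beta^{1-\sqrt{q}}$ by applying Frobenius to the equation and eliminating $X_0^{\sqrt{q}}$ instead of expanding $\Nm\alpha=1$ directly, and your ansatz $X_0=c\,\beta^{\sqrt{q}}$ with $\Tr c=-1$ is the substitution $Y=\beta X$, $\Tr Y=-\beta^{\sqrt{q}+1}$ of the paper in slightly different clothing.
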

\begin{proof}
For a fixed $\alpha \in \mathbb{F}_q^{\ast}$,
consider the $\mathbb{F}_{\sqrt{q}}$-linear map
$\varphi_{\alpha} : \mathbb{F}_q \to \mathbb{F}_q$
given by $\varphi_{\alpha}(z) = z^{\sqrt{q}} + \alpha z$.
Since $\Nm (-\alpha) = \Nm \alpha$,
the kernel of $\varphi_{\alpha}$ is nontrivial if and only if
$\Nm \alpha =1$ by Corollary~\ref{corollary_hilbert90}~(ii).

Therefore if $\Nm \alpha \neq 1$, then $\varphi_{\alpha}$
is an isomorphism. Hence
there is a unique element of $\mathbb{F}_q$, say $z_0$,
so that $\varphi_{\alpha}(z_0)= - \beta$.
So the proof of (ii) is done.

Next suppose that $\Nm \alpha =1$.
In this case, $\dim \Ker \varphi_{\alpha} >0$
from the first paragraph of this proof,
in particular there is an element $z_0 \in \mathbb{F}_q^{\ast}$
such that $z_0^{\sqrt{q}-1} + \alpha =0$.
Hence
\[
\Ker \varphi_{\alpha} = 
\{ \zeta z_0 \mid \zeta \in \mathbb{F}_{\sqrt{q}}^{\ast}\} \cup \{0\}.
\]
If (\ref{unifiedequation}) has a solution $z_1$,
the set of all solutions of (\ref{unifiedequation}) is
\[
\{ z_1 + \zeta z_0 \mid \zeta \in \mathbb{F}_{\sqrt{q}}^{\ast}\} \cup \{z_1\},
\]
which is a subset of $\mathbb{F}_q$.
So the additional statement of (i) is done.

Now we prove the first part of (i).
To prove the `only if' part,
we may suppose that $\beta \neq 0$.
Let $z_1$ be an $\mathbb{F}_q$-solution of (\ref{unifiedequation}).
Then
$\displaystyle \alpha = -\frac{z_1^{\sqrt{q}}+\beta}{z_1}$.
Since $\Nm \alpha =1$ and $(z_1^{\sqrt{q}})^{\sqrt{q}}= z_1$,
we have
\begin{eqnarray*}
1 &=& 
\Nm \alpha = 
-(\frac{z_1^{\sqrt{q}}+\beta}{z_1})^{\sqrt{q}}
(-\frac{z_1^{\sqrt{q}}+\beta}{z_1})\\
&=& \frac{z_1^{\sqrt{q}+1}+ \beta^{\sqrt{q}}z_1^{\sqrt{q}} +\beta z_1 
+\beta^{\sqrt{q}+1}}{z_1^{\sqrt{q}+1}}.
\end{eqnarray*}
Hence
\[
z_1^{\sqrt{q}} + \beta^{1- \sqrt{q}}z_1  + \beta =0.
\]
Since $z_1$ satisfies (\ref{unifiedequation}),
we get $\alpha = \beta^{1-\sqrt{q}}$.
Conversely, if $\beta =0$, obviously (\ref{unifiedequation})
has an $\mathbb{F}_q$-solution.
Hence we may consider the case $\alpha = \beta^{1-\sqrt{q}}$
with $\beta \neq 0$.
In this case,
(\ref{unifiedequation}) is
\begin{equation}\label{prefinalequation}
X^{\sqrt{q}} + \beta^{1-\sqrt{q}} X + \beta =0
\end{equation}
which is equivalent to
\begin{equation}\label{finalequation}
(\beta X)^{\sqrt{q}} + (\beta X) + \beta^{\sqrt{q}+1} =0.
\end{equation}
Since
\[
(-\beta^{\sqrt{q}+1})^{\sqrt{q}-1} = \beta^{q-1} = 1
\]
because $\beta \in \mathbb{F}_q^{\ast}$,
we know $-\beta^{\sqrt{q}+1} \in \mathbb{F}_{\sqrt{q}}^{\ast}$.
Since $\Tr: \mathbb{F}_q \to \mathbb{F}_{\sqrt{q}}$ is surjective,
the equation (\ref{finalequation}) in $(\beta X)$
has a solution 
in $\mathbb{F}_q$,
and also (\ref{prefinalequation}) has.
\end{proof}

\subsection{Proof of Theorem~\ref{maintheorem}}
\begin{lemma}\label{coordinate_simplify}
Let $C$ be a relative of the Hermitian curve with at least
two rational inflexions.
Then we can choose a system of coordinates $x,y,z$
of $\mathbb{P}^2$ over $\mathbb{F}_q$ such that
\begin{itemize}
\item $P=(1,0,0)$ and $Q=(0,1,0)$ are inflexions of $C$, and
\item $T_P(C) =\{ y=0 \}$ and $T_Q(C) =\{ x=0\}$.
\end{itemize}
\end{lemma}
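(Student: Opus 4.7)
The plan is to construct a projective frame $(P_1,P_2,R)$ defined over $\mathbb{F}_q$ from the two inflexions and their tangent lines, and then send this frame to the standard frame by an element of $PGL(3,\mathbb{F}_q)$.

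First I would let $P_1,P_2$ be two rational inflexions of $C$ and set $\ell_i = T_{P_i}(C)$ for $i=1,2$. By the definition of an inflexion recalled at the end of Section~1, $i(C.\ell_i;P_i) = \sqrt{q}+1 = \deg C$, so Bezout's theorem forces $C\cdot\ell_i = (\sqrt{q}+1)P_i$ as divisors; in particular $\ell_i \cap C = \{P_i\}$ set-theoretically. Reading this off for $i=1$ and $i=2$ separately, I get $P_2 \notin \ell_1$ and $P_1 \notin \ell_2$, and hence $\ell_1 \neq \ell_2$.

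Next I would set $R = \ell_1 \cap \ell_2$. This is a single point because the two lines are distinct, and it is $\mathbb{F}_q$-rational because both tangent lines are defined over $\mathbb{F}_q$. Also $R \neq P_1,P_2$, for otherwise one of the exclusions in the previous paragraph would fail; and $P_1,P_2,R$ are non-collinear, since any line through the three would contain $\{P_1,R\} \subset \ell_1$ and $\{P_2,R\} \subset \ell_2$ and hence coincide with both $\ell_1$ and $\ell_2$, contradicting $\ell_1 \neq \ell_2$. There is therefore a unique $T \in PGL(3,\mathbb{F}_q)$ sending $(P_1,P_2,R)$ to $((1,0,0),(0,1,0),(0,0,1))$, and under $T$ the line $\overline{P_1 R} = \ell_1$ becomes the line through $(1,0,0)$ and $(0,0,1)$, namely $\{y=0\}$, while $\overline{P_2 R} = \ell_2$ becomes $\{x=0\}$.

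The only non-cosmetic step is the observation that the tangent at an inflexion meets $C$ only at that inflexion; this is the engine of the argument, but it follows instantly from Bezout once one notices that the contact multiplicity equals the degree. No auxiliary computation will be needed, and the assumption $q>4$ (required for Corollary~\ref{characterization}) plays no role because the inflexion property is used only as a definition.
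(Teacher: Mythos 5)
Your proof is correct and follows essentially the same route as the paper's: both deduce from $C\cdot T_{P_i}(C)=(\sqrt{q}+1)P_i$ (contact multiplicity equal to the degree, hence the tangent at an inflexion meets $C$ only there) that the two tangent lines and the line $P_1P_2$ form a genuine triangle defined over $\mathbb{F}_q$, and then normalize it to the coordinate triangle. One small slip: three non-collinear points determine an element of $PGL(3,\mathbb{F}_q)$ only up to the diagonal torus, so the word ``unique'' should be dropped --- the existence of some such $T$ is all you need and all you actually have.
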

\begin{proof}
Since
$C.T_P(C) = (\sqrt{q}+1)P$
and 
$C.T_Q(C) = (\sqrt{q}+1)Q$,
three $\mathbb{F}_q$-lines
$T_P(C)$, $T_Q(C)$ and $PQ$ are not concurrent,
where $PQ$ denotes the line through $P$ and $Q$.
Therefore, we may choose coordinates as
$T_P(C) =\{ y=0 \}$, $T_Q(C) =\{ x=0\}$
and $PQ=\{z=0\}$.
\end{proof}

\begin{lemma}\label{equation_simplified}
Under the situation described in Lemma~\rm{\ref{coordinate_simplify}},
the curve is defined by
$(x^{\sqrt{q}}, y^{\sqrt{q}}, z^{\sqrt{q}}) A \, {}^t\!(x,y,z) 
                                            =0,$
where
\[
A =
\begin{pmatrix}
0& a_{12} & 0 \\
a_{21} & 0 & 0\\
0 & 0 &a_{33}
\end{pmatrix} 
\in PGL(3, \mathbb{F}_q) \ \text{with} \ a_{12}a_{21}a_{33} \neq 0.
\]
In other words,
$C$ is defined by
\begin{equation}\label{simplifiedequation}
 a_{12}x^{\sqrt{q}}y + a_{21} x y^{\sqrt{q}} + a_{33}z^{\sqrt{q}+1} =0.
\end{equation}
\end{lemma}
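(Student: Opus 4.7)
The plan is to pin down the entries of $A = (a_{ij})$ one by one by translating each geometric hypothesis into an algebraic condition on the coefficients of the defining polynomial
\[
f = (x^{\sqrt{q}}, y^{\sqrt{q}}, z^{\sqrt{q}}) A\, {}^t\!(x,y,z).
\]
That $P$ and $Q$ lie on $C$ is immediate: substituting $(1,0,0)$ into $f$ gives $a_{11} = 0$, and substituting $(0,1,0)$ gives $a_{22} = 0$.

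Next I would determine the tangent lines at $P$ and $Q$ by means of the formula $(f_x, f_y, f_z) = (x^{\sqrt{q}}, y^{\sqrt{q}}, z^{\sqrt{q}}) A$ recalled in (\ref{partial_derivatives}). Evaluating at $P$ yields $(0, a_{12}, a_{13})$, so $T_P(C) = \{a_{12}y + a_{13}z = 0\}$; matching this with the hypothesis $T_P(C) = \{y=0\}$ forces $a_{13}=0$ and $a_{12}\neq 0$. Symmetrically, evaluating at $Q$ and using $T_Q(C) = \{x=0\}$ gives $a_{23}=0$ and $a_{21}\neq 0$.

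The decisive step is to exploit that $P$ and $Q$ are inflexions, i.e.\ $i(C.T_P(C);P) = \sqrt{q}+1$ and similarly at $Q$. With the entries already determined, the restriction of $f$ to $\{y=0\}$ reads $z^{\sqrt{q}}(a_{31}x + a_{33}z)$. In the affine chart $x=1$ near $P$, where $z$ is a local parameter on the line $\{y=0\}$, the factor $z^{\sqrt{q}}$ contributes order exactly $\sqrt{q}$, while the factor $a_{31} + a_{33}z$ contributes $0$ if $a_{31}\neq 0$ and strictly more if $a_{31}=0$. Hence $i(C.\{y=0\};P) = \sqrt{q}+1$ if and only if $a_{31}=0$. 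The same argument applied to the restriction of $f$ to $\{x=0\}$ yields $a_{32}=0$.

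At this stage the matrix $A$ has precisely the shape displayed in the statement, and it remains only to verify $a_{12}a_{21}a_{33}\neq 0$. The first two inequalities were already obtained in the tangent-line step. The third follows from the nonsingularity of $C$ (recorded in the remark containing (\ref{partial_derivatives})), which forces $\det A \neq 0$; expanding along the first row gives $\det A = -a_{12}a_{21}a_{33}$, so $a_{33}\neq 0$ as well. The only delicate point is the local intersection-multiplicity calculation, but this amounts to reading off the order of vanishing of an explicit binomial in $z$, so no real obstacle is expected.
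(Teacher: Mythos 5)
Your proposal is correct and follows essentially the same route as the paper's own proof: membership of $P,Q$ kills $a_{11},a_{22}$, the gradient formula and the prescribed tangent lines kill $a_{13},a_{23}$, the inflexion condition applied to the restriction of $f$ to each tangent line kills $a_{31},a_{32}$, and $\det A=-a_{12}a_{21}a_{33}\neq 0$ gives the nonvanishing of the remaining entries. The only cosmetic difference is that you also extract $a_{12}\neq 0$ and $a_{21}\neq 0$ directly from the tangent-line computation, whereas the paper gets all three nonvanishing conditions at once from $\det A\neq 0$.
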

\begin{proof}
Let $C=C_A$, where $A= (a_{ij}) \in PGL(3, \mathbb{F}_q)$.
Since $(1,0,0)A{}^t\! (1,0,0)=0$ and  $(0,1,0)A{}^t\! (0,1,0)=0$,
$a_{11}= a_{22}=0$.

Note that
the tangent line at $(b_1, b_2, b_3) \in C$ is given by
\[
(b_1^{\sqrt{q}}, b_2^{\sqrt{q}}, b_3^{\sqrt{q}})A {}^t\! (x,y,z) =0.
\]
Hence $a_{13} =0$ because $T_P(C) =\{ y=0 \}$, and
$a_{23}=0$ because  $T_Q(C) =\{ x=0\}$.
Therefore an equation of $C$ is of the form
\[
 a_{12}x^{\sqrt{q}}y + a_{21} x y^{\sqrt{q}} + a_{31}xz^{\sqrt{q}}
+ a_{32}yz^{\sqrt{q}}+ a_{33}z^{\sqrt{q}+1} =0.
\]
Since $P$ is an inflexion of $C$ and $T_P(C) = \{y=0 \}$,
we have  $C.\{y=0\} = (\sqrt{q}+1)P$.
Hence the equation
\[
 a_{31}xz^{\sqrt{q}}+ a_{33}z^{\sqrt{q}+1} =0
\]
has the root at $z=0$
of multiplicity $(\sqrt{q}+1)$.
So $a_{31}=0$.
Similarly,
since
$C.\{x=0\} = (\sqrt{q}+1)Q$, from the equation
$
 a_{32}yz^{\sqrt{q}}+ a_{33}z^{\sqrt{q}+1} =0,
$
we know $a_{32} =0$.
Finally $\det A \neq 0$
implies $a_{12}a_{21}a_{33} \neq 0$.
\end{proof}

\begin{corollary}\label{proofoffirstpart}
After a suitable projective transformation over $\mathbb{F}_q$,
$C$ is defined by
\[
 x^{\sqrt{q}}y + \omega x y^{\sqrt{q}} + z^{\sqrt{q}+1} =0
\]
with $\omega \in \mathbb{F}_q^{\ast}$.
Conversely, a curve defined by this type of equation
is Hermitian-relative with two or more inflexions.
\end{corollary}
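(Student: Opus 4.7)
The plan is to take the equation supplied by Lemma~\ref{equation_simplified} and normalize the three nonzero coefficients $a_{12}, a_{21}, a_{33}$ using a diagonal projective transformation $(x,y,z) \mapsto (\lambda x, \mu y, \nu z)$ with $\lambda,\mu,\nu \in \mathbb{F}_q^{\ast}$, together with an overall scaling of the defining polynomial by some $\kappa \in \mathbb{F}_q^{\ast}$. Under this combined operation, the three coefficients become
\[
\kappa a_{12}\lambda^{\sqrt{q}}\mu,\qquad \kappa a_{21}\lambda\mu^{\sqrt{q}},\qquad \kappa a_{33}\nu^{\sqrt{q}+1},
\]
and the goal is to set the first and the third to $1$, letting $\omega \in \mathbb{F}_q^{\ast}$ be whatever element the middle coefficient becomes.

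Since we have four scaling parameters constrained by only two normalization equations, there is ample room. The choice $\lambda = \nu = 1$, $\mu = a_{33}/a_{12}$, $\kappa = 1/a_{33}$ works and yields $\omega = a_{21}a_{33}^{\sqrt{q}-1}/a_{12}^{\sqrt{q}} \in \mathbb{F}_q^{\ast}$. The only subtlety worth noting is that a naive attempt to normalize $a_{33}$ directly would require extracting a $(\sqrt{q}+1)$-th root in $\mathbb{F}_q^{\ast}$, whose image is only $\mathbb{F}_{\sqrt{q}}^{\ast}$ and so in general unavailable; letting $\mu$ absorb the discrepancy bypasses this obstruction entirely and keeps the whole transformation defined over $\mathbb{F}_q$. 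This is the main (mild) point that needs care.

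For the converse, the equation $x^{\sqrt{q}}y + \omega x y^{\sqrt{q}} + z^{\sqrt{q}+1} = 0$ is (\ref{definition_hermitian}) with matrix
\[
A = \begin{pmatrix} 0 & 1 & 0 \\ \omega & 0 & 0 \\ 0 & 0 & 1 \end{pmatrix},
\]
whose determinant is $-\omega \neq 0$, so the curve is nonsingular and hence a genuine Hermitian-relative curve. Applying the tangent formula (\ref{partial_derivatives}) at $P = (1,0,0)$ yields $T_P(C) = \{y=0\}$; substituting $y = 0$ into the equation leaves only $z^{\sqrt{q}+1} = 0$, so $C.T_P(C) = (\sqrt{q}+1)P$ and $P$ is a rational inflexion. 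A symmetric computation at $Q = (0,1,0)$ exhibits the second rational inflexion, completing the argument.
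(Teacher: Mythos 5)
Your proposal is correct and follows essentially the same route as the paper: an overall rescaling of the defining polynomial together with a rescaling of $y$ (the paper takes $a_{33}=1$ and then replaces $y$ by $a_{12}y$, which is exactly your choice $\kappa=1/a_{33}$, $\mu=a_{33}/a_{12}$, yielding the same $\omega=a_{21}a_{33}^{\sqrt{q}-1}a_{12}^{-\sqrt{q}}$), followed by the same direct verification that $(1,0,0)$ and $(0,1,0)$ are inflexions of the resulting nonsingular curve. Your remark about the norm obstruction to normalizing $a_{33}$ via $z$ alone is a correct and worthwhile clarification, but does not change the argument.
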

\begin{proof}
Since $a_{33} \neq 0$, we may suppose that $a_{33} =1$ in 
(\ref{simplifiedequation}).
If one consider $a_{12} y$ as new $y$,
then the equation becomes
\[
x^{\sqrt{q}}y 
+ a_{21}a_{12}^{-{\sqrt{q}} } x y^{\sqrt{q}} 
+ z^{\sqrt{q}+1} =0.
\]
Put $a_{21}a_{12}^{-{\sqrt{q}} }= \omega.$

To confirm the last assertion is not difficult,
actually $(1,0,0)$ and $(0,1,0)$ are inflexions.
\end{proof}
By Corollary~\ref{proofoffirstpart},
we have done the proof of the first paragraph
of Theorem~\ref{maintheorem}.

Now we investigate the curve (\ref{at_least_two_inflexions}),
hereafter, this curve will be denoted by $C_{\omega}$.

\begin{lemma}\label{number_on_infty_line}
The number of $\mathbb{F}_q$-points on $C_{\omega} \cap \{ z=0 \}$
is $\sqrt{q} +1$ if $\Nm \omega = 1$, and
$2$ if $\Nm \omega \neq 1$.
\end{lemma}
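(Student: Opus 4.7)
The plan is to substitute $z=0$ directly into the defining equation of $C_\omega$ and factor. When $z=0$, the equation reduces to $x^{\sqrt{q}}y + \omega x y^{\sqrt{q}} = 0$, which factors as $xy(x^{\sqrt{q}-1} + \omega y^{\sqrt{q}-1}) = 0$. So I would split the count of projective $\mathbb{F}_q$-points on $C_\omega \cap \{z=0\}$ into the two cases $xy=0$ and $xy \neq 0$.

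First I would dispose of the easy case $xy=0$ (with $z=0$): this yields exactly the two rational points $P=(1,0,0)$ and $Q=(0,1,0)$, accounting for the ``$2$'' in both alternatives of the lemma.

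Next I would parameterize the remaining points by $t=x/y$, which lives in $\mathbb{F}_q^{\ast}$ and must satisfy the binomial equation $t^{\sqrt{q}-1} = -\omega$. This is precisely the equation $(\ref{multiplicative_eq})$ from Corollary~\ref{corollary_hilbert90}(ii) with $\beta=-\omega$, so it has a root in $\mathbb{F}_q^{\ast}$ if and only if $\Nm(-\omega)=1$, and in that case all $\sqrt{q}-1$ roots lie in $\mathbb{F}_q^{\ast}$ and give $\sqrt{q}-1$ distinct projective points $(t,1,0)$. A small verification, and the only real subtlety, is that $\Nm(-\omega)=\Nm(\omega)$: indeed, $\Nm(-\omega) = (-1)^{\sqrt{q}+1}\Nm(\omega)$, and $(-1)^{\sqrt{q}+1}=1$ both in characteristic $2$ (trivially) and in odd characteristic (because then $\sqrt{q}$ is odd, so $\sqrt{q}+1$ is even). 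Hence the condition becomes $\Nm\omega=1$.

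Combining, if $\Nm\omega=1$ we get $2+(\sqrt{q}-1)=\sqrt{q}+1$ rational points on the line at infinity, and otherwise we get just the two points $P$ and $Q$. There is no genuine obstacle here; the proof is essentially a factorization together with an invocation of Hilbert 90 in its multiplicative form.
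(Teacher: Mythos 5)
Your proof is correct and follows essentially the same route as the paper: both reduce the count on $\{z=0\}$ to the two coordinate points $P$, $Q$ plus the solutions of the binomial equation $x^{\sqrt{q}-1}+\omega=0$, and both invoke Corollary~\ref{corollary_hilbert90}(ii) together with the observation that $\Nm(-\omega)=\Nm(\omega)$. The only cosmetic difference is that you factor the dehomogenized form as $xy(x^{\sqrt{q}-1}+\omega y^{\sqrt{q}-1})$ while the paper sets $y=1$ first; the substance is identical.
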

\begin{proof}
For any $\omega$,
$P=(1,0,0)$ and $Q=(0,1,0)$
are rational inflexions on $C_{\omega}$.
Since $\{z=0\} \cap \{y=0\} =\{P\}$,
it is enough to count the number of
$\mathbb{F}_q$-solutions of (\ref{at_least_two_inflexions})
when $y=1$ and $z=0$.
The equation in question is
$x^{\sqrt{q}} + \omega x =0.$
Except for $Q$, the other points of $C_{\omega} \cap \{ z=0 \} \cap \{y=1\}$
correspond to
the roots of $x^{\sqrt{q}-1} + \omega  =0.$
By Corollary~\ref{corollary_hilbert90},
the number of $\mathbb{F}_q$-solutions
of this equation is
\[
\left\{
\begin{array}{cl}
\sqrt{q}-1 & \text{if $\Nm \omega \, (= \Nm (-\omega)) =1 $}\\
 0          &\text{ else.}
\end{array}
\right.
\]
Adding $2$ coming from $\{P, Q\}$ in each cases, we have the desired numbers.
\end{proof}

Next we compute the number of rational points of $C_{\omega}$
outside of $\{z=0\}$.

\begin{lemma}\label{number_on_affine}
The number of $\mathbb{F}_q$-solutions of affine equation
\begin{equation}\label{affine_eq_omega}
 x^{\sqrt{q}}y + \omega x y^{\sqrt{q}} + 1 =0
\end{equation}
is
\[
\left\{
\begin{array}{cl}
\sqrt{q}(q-1) & \text{ if $\omega =1$}\\
0             & \text{ if $\Nm \omega =1$ and $\omega \neq 1$}\\
q-1           & \text{ if $\Nm \omega \neq 1$}.
\end{array}
\right.
\]
\end{lemma}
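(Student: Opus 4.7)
The plan is to count solutions of (\ref{affine_eq_omega}) by fixing $x$ and treating the equation as one in $y$ of the form handled by Proposition~\ref{equation_new}. First I would observe that if $x=0$ or $y=0$, then (\ref{affine_eq_omega}) reduces to $1=0$, so every $\mathbb{F}_q$-solution satisfies $x, y \in \mathbb{F}_q^{\ast}$. This allows me to divide freely.

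Next, for each fixed $x \in \mathbb{F}_q^{\ast}$, I would divide (\ref{affine_eq_omega}) by $\omega x$ to rewrite it in the form
\[
y^{\sqrt{q}} + \alpha\, y + \beta = 0, \qquad
\alpha = \frac{x^{\sqrt{q}-1}}{\omega}, \quad \beta = \frac{1}{\omega x},
\]
which is exactly equation (\ref{unifiedequation}). A direct computation gives
\[
\Nm \alpha = \frac{x^{q-1}}{\Nm \omega} = \frac{1}{\Nm \omega},
\]
so the condition $\Nm \alpha = 1$ is equivalent to $\Nm \omega = 1$. Proposition~\ref{equation_new} now splits the count into three sub-cases based on $\omega$.

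When $\Nm \omega \neq 1$, part (ii) of Proposition~\ref{equation_new} gives a unique $y \in \mathbb{F}_q$ for each of the $q-1$ choices of $x$, totaling $q-1$ solutions. When $\Nm \omega = 1$, part (i) applies: the equation has an $\mathbb{F}_q$-solution precisely when $\beta = 0$ or $\alpha = \beta^{1-\sqrt{q}}$. The first is excluded since $\beta \neq 0$, so the key computation is to unwind $\alpha = \beta^{1-\sqrt{q}}$. Substituting the expressions for $\alpha$ and $\beta$ yields
\[
\frac{x^{\sqrt{q}-1}}{\omega} = (\omega x)^{\sqrt{q}-1} = \omega^{\sqrt{q}-1} x^{\sqrt{q}-1},
\]
which simplifies to $\omega^{\sqrt{q}} = 1$; combined with $\omega^{q-1}=1$ this forces $\omega = 1$.

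Hence, for $\Nm \omega = 1$ and $\omega \neq 1$, no $x \in \mathbb{F}_q^{\ast}$ yields a solution, giving total $0$. For $\omega = 1$, every $x \in \mathbb{F}_q^{\ast}$ makes the equation solvable, and by the second assertion of Proposition~\ref{equation_new}~(i) all $\sqrt{q}$ roots lie in $\mathbb{F}_q$, yielding $\sqrt{q}(q-1)$ solutions. The main (minor) obstacle is the algebraic manipulation in the borderline case $\Nm \omega = 1$ to isolate $\omega = 1$; the rest is a bookkeeping application of the earlier proposition.
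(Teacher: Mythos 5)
Your proof is correct, and it reaches the count by a different decomposition than the paper's. You fix $x \in \mathbb{F}_q^{\ast}$ and apply Proposition~\ref{equation_new} fiberwise to the equation $y^{\sqrt{q}} + (x^{\sqrt{q}-1}/\omega)\,y + (\omega x)^{-1} = 0$ in $y$, once for each of the $q-1$ values of $x$; the paper instead substitutes $T = x y^{\sqrt{q}}$ (using $y^{q}=y$ for $y \in \mathbb{F}_q$) to collapse the whole problem into the single equation $T^{\sqrt{q}} + \omega T + 1 = 0$, applies the proposition once with $(\alpha,\beta)=(\omega,1)$, and multiplies the number of roots by $q-1$, since each root $t_0$ together with each choice of $y_0 \in \mathbb{F}_q^{\ast}$ recovers exactly one solution $(t_0/y_0^{\sqrt{q}}, y_0)$. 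Both arguments hinge on the same key Proposition~\ref{equation_new}. The paper's substitution makes it transparent that the solvability criterion is independent of $x$; in your version that independence only emerges after unwinding $\alpha=\beta^{1-\sqrt{q}}$ to $\omega^{\sqrt{q}}=1$, hence $\omega=1$ --- a computation you carry out correctly. What your route buys is that no substitution needs to be spotted: the bookkeeping is routine, and you correctly note that $\beta\neq 0$ rules out both the $\beta=0$ branch of part~(i) and the spurious value $y=0$, so each fiber contributes $0$, $1$, or $\sqrt{q}$ points as claimed.
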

\begin{proof}
If the affine equation (\ref{affine_eq_omega})
has an $\mathbb{F}_q$-solution $(x_0, y_0)$, 
then $x_0y_0^{\sqrt q}$ is a root of the equation in $T$
\begin{equation}\label{polynomial_eq}
T^{\sqrt{q}} + \omega T +1 =0.
\end{equation}
Conversely if $t_0$ is a root of (\ref{polynomial_eq}),
then 
$(\frac{t_0}{y_0^{\sqrt{q}}}, y_0)$ is a solution of (\ref{affine_eq_omega})
for any $y_0 \in \mathbb{F}_q^{\ast}$.
Therefore
the number of solutions of (\ref{affine_eq_omega})
is 
\[
\{ \text{the number of solutions of (\ref{polynomial_eq})} \} \times (q-1).
\]

Suppose that $\Nm \omega =1$.
Then, by Proposition~\ref{equation_new}~(i),
(\ref{polynomial_eq}) has an $\mathbb{F}_q$-solution if and only if
$\omega =1$. In this case, 
it has exactly $\sqrt{q}$ rational solutions, and
totally (\ref{affine_eq_omega}) has $\sqrt{q}(q-1)$ rational solutions.

Suppose that $\Nm \omega \neq 1$.
Then, from the second part of the same proposition,
(\ref{polynomial_eq}) has a unique $\mathbb{F}_q$-solution,
and then (\ref{affine_eq_omega}) has $(q-1)$ rational solutions.
\end{proof}

The following three lemmas will be used for
showing the part (b) of Theorem~\ref{maintheorem}.

The element of $PGL(3, \mathbb{F}_q)$ which comes from
the diagonal matrix with entries $a, b, c$
from the upper left to the lower right
will be denoted by $\diag[a,b.c]$.

\begin{lemma}\label{b_to_diagonal}
For $\eta \in \mathbb{F}_q^{\ast}$,
we have
\[
\begin{pmatrix}
0& 1 & 0 \\
1& 0 & 0 \\
0& 0 & \eta 
\end{pmatrix}
\sim
\diag[1,1,\eta].
\]
\end{lemma}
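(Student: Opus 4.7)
The plan is to exploit the block structure of the matrix. Seek $T \in GL(3, \mathbb{F}_q)$ of the form $T = \begin{pmatrix} T' & 0 \\ 0 & 1 \end{pmatrix}$ with $T' \in GL(2, \mathbb{F}_q)$; since $\ast$-conjugation respects block decomposition, the claimed identity $\diag[1,1,\eta] = T^{\ast} M T$ (where $M$ denotes the given matrix) reduces to producing a $T'$ with $T'^{\ast} H T' = I_2$, where $H = \begin{pmatrix} 0 & 1 \\ 1 & 0 \end{pmatrix}$. In other words, I would diagonalize the Hermitian-type form $(u,v) \mapsto u^{\sqrt q} v + v^{\sqrt q} u = \Tr(u^{\sqrt q} v)$ to the standard form $u^{\sqrt q+1} + v^{\sqrt q+1}$, i.e., build an ``orthonormal basis'' of $\mathbb{F}_q^2$ for $H$.

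For the first basis vector, pick $a \in \mathbb{F}_q$ with $\Tr(a) = 1$, which exists by surjectivity of the trace (Lemma~\ref{lemma_hilbert90}); then $v_1 = {}^t(1,a)$ satisfies $v_1^{\ast} H v_1 = a + a^{\sqrt q} = 1$. Next look for $v_2 = {}^t(b,c)$ orthogonal to $v_1$: the condition $v_1^{\ast} H v_2 = 0$ forces $c = -a^{\sqrt q} b$, and a short computation using $b^{\sqrt q + 1} = \Nm(b) \in \mathbb{F}_{\sqrt q}$ gives
\[
v_2^{\ast} H v_2 = -a^{\sqrt q} b^{\sqrt q+1} - a b^{\sqrt q+1} = -\Nm(b)\,\Tr(a) = -\Nm(b).
\]
So pick $b \in \mathbb{F}_q^{\ast}$ with $\Nm(b) = -1$, which exists by surjectivity of the norm (Lemma~\ref{lemma_hilbert90}; trivial in characteristic $2$, where one may take $b=1$).

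Assembling $T' = \begin{pmatrix} 1 & b \\ a & -a^{\sqrt q} b \end{pmatrix}$, one has $\det T' = -b\,\Tr(a) = -b \neq 0$, so $T' \in GL(2, \mathbb{F}_q)$, and by construction $T'^{\ast} H T' = I_2$. The matrix $T = \begin{pmatrix} T' & 0 \\ 0 & 1 \end{pmatrix}$ then yields $T^{\ast} M T = \diag[1,1,\eta]$ by block multiplication, which is the asserted equivalence in $PGL(3, \mathbb{F}_q)$. There is really no obstacle beyond a short $2 \times 2$ matrix computation: the only substantive input is the existence of the scalars $a$ and $b$, which is immediate from the surjectivity statements in Lemma~\ref{lemma_hilbert90}.
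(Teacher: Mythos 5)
Your proof is correct and follows essentially the same route as the paper: the paper's matrix $T_0$ is likewise block-diagonal with a $2\times 2$ block built from an element $u$ of trace $1$ and an element $\alpha$ of norm $-1$, and your $T'$ is the same construction up to swapping the roles of the two coordinates. The only cosmetic difference is that you present the $2\times2$ computation as a Gram--Schmidt-style diagonalization of the Hermitian form $u^{\sqrt q}v+uv^{\sqrt q}$, whereas the paper just states the matrix and asserts the verification.
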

\begin{proof}
Choose $u \in \mathbb{F}_q$
so that $\Tr u =1$
and $\alpha \in  \mathbb{F}_q^{\ast}$
so that $\Nm \alpha =-1$.
Let
$
T_0 = \begin{pmatrix}
      u & \alpha u^{\sqrt{q}} & 0\\
      1 & -\alpha& 0\\
      0 & 0& 1
      \end{pmatrix}.
$
Then by straightforward computation, we know that
$
T_0^{\ast} \begin{pmatrix}
     0& 1 & 0\\
     1& 0 & 0\\
     0& 0 & \omega
   \end{pmatrix}
   T_0 = \diag[1,1,\eta].
$
\end{proof}

\begin{lemma}\label{number_point_b}
Let $\eta \in \mathbb{F}_q^{\ast} \setminus \mathbb{F}_{\sqrt{q}}^{\ast}$.
Then the number of rational points of $C_{\diag[1,1,\eta]}$
is $\sqrt{q} +1$, and these rational points are on the line $\{z=0\}$.
\end{lemma}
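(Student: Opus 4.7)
The plan is to write out the defining equation of $C_{\diag[1,1,\eta]}$ explicitly and then split the count into points on $\{z=0\}$ and points off it. The equation is
\[
x^{\sqrt{q}+1} + y^{\sqrt{q}+1} + \eta z^{\sqrt{q}+1} = 0,
\]
so the two pieces are genuinely different in character.

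On the line $\{z=0\}$, the equation reduces to $x^{\sqrt{q}+1} + y^{\sqrt{q}+1} = 0$. Dehomogenizing by setting $y=1$ (the case $y=0$ forcing $x=0$ gives no projective point), I would reduce to counting solutions of $t^{\sqrt{q}+1} = -1$ in $\mathbb{F}_q^{\ast}$, where $t = x$. Because $\mathrm{Nm} : \mathbb{F}_q^{\ast} \to \mathbb{F}_{\sqrt{q}}^{\ast}$ is surjective with kernel of order $\sqrt{q}+1$, and $-1 \in \mathbb{F}_{\sqrt{q}}^{\ast}$, this equation has exactly $\sqrt{q}+1$ solutions. Thus $\{z=0\}$ already contributes $\sqrt{q}+1$ rational points.

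Off $\{z=0\}$, I dehomogenize by setting $z=1$, giving $x^{\sqrt{q}+1} + y^{\sqrt{q}+1} = -\eta$. The key observation is that for any $x,y \in \mathbb{F}_q$, both $x^{\sqrt{q}+1}$ and $y^{\sqrt{q}+1}$ lie in $\mathbb{F}_{\sqrt{q}}$ (each is either $0$ or an element of $\mathrm{Nm}(\mathbb{F}_q^{\ast}) = \mathbb{F}_{\sqrt{q}}^{\ast}$, and $\mathbb{F}_{\sqrt{q}}$ is additively closed). Hence the left-hand side lies in $\mathbb{F}_{\sqrt{q}}$. On the other hand, by hypothesis $\eta \in \mathbb{F}_q^{\ast} \setminus \mathbb{F}_{\sqrt{q}}^{\ast}$, and since $-1 \in \mathbb{F}_p \subseteq \mathbb{F}_{\sqrt{q}}$, we have $-\eta \notin \mathbb{F}_{\sqrt{q}}$. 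This is a contradiction, so no rational point of $C_{\diag[1,1,\eta]}$ lies off $\{z=0\}$.

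Combining the two pieces gives exactly $\sqrt{q}+1$ rational points, all lying on $\{z=0\}$, as claimed. There is no real obstacle here; the only subtle step is recognising that $-1 \in \mathbb{F}_{\sqrt{q}}$ (immediate because $\sqrt{q}$ is a power of $p$) so that the hypothesis $\eta \notin \mathbb{F}_{\sqrt{q}}$ transfers to $-\eta$.
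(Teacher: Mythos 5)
Your proof is correct and uses essentially the same idea as the paper: both arguments rest on the observation that $x^{\sqrt{q}+1}, y^{\sqrt{q}+1}, z^{\sqrt{q}+1}$ lie in $\mathbb{F}_{\sqrt{q}}$ while $\eta \notin \mathbb{F}_{\sqrt{q}}$, forcing $z=0$, and then count the fibre of the norm map over $-1$, which has $\sqrt{q}+1$ elements. The paper phrases the first step as linear independence of $\{1,\eta\}$ over $\mathbb{F}_{\sqrt{q}}$ rather than as a case split on $z$, but this is only a cosmetic difference.
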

\begin{proof}
The curve $C_{\diag[1,1,\eta]}$ is defined by
\begin{equation}\label{diag_eq_omega}
x^{\sqrt{q}+1} + y^{\sqrt{q}+1} +\eta z^{\sqrt{q}+1} =0.
\end{equation}
If $(x_0, y_0, z_0) \in (\mathbb{F}_q)^3$
is a solution of (\ref{diag_eq_omega}),
then it is a solution of the simultaneous equations:
\[ \left\{
\begin{aligned}
x^{\sqrt{q}+1} + y^{\sqrt{q}+1} =0\\
z^{\sqrt{q}+1} =0
\end{aligned}
\right.
\]
because 
$x_0^{\sqrt{q}+1},  y_0^{\sqrt{q}+1}, z_0^{\sqrt{q}+1}$
 are elements of $\mathbb{F}_{\sqrt{q}}$, 
and $\{1, \eta \}$ are linearly independent over $\mathbb{F}_{\sqrt{q}}$.
Therefore
\[
C_{\diag[1,1,\eta]}(\mathbb{F}_q) =
\{ (\zeta , 1, 0) 
| \Nm \zeta = -1
\}
\subset \{ z=0 \}.
\]
This completes the proof.
\end{proof}

\begin{lemma}\label{equiv_type1}
Let 
$\eta, \eta' \in \mathbb{F}_q^{\ast} \setminus \mathbb{F}_{\sqrt{q}}^{\ast}$.
Then
there is an element $T \in PGL(3, \mathbb{F}_q)$ such that
\[
T^{\ast} \diag[1,1, \eta] T = \diag[1,1,\eta']
\]
if and only if $\eta' \eta^{-1} \in \mathbb{F}_{\sqrt{q}}^{\ast}$.
\end{lemma}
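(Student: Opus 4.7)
The plan is to prove the two implications separately. The ``$\Leftarrow$'' direction is a one-line verification: assuming $\lambda := \eta'\eta^{-1} \in \mathbb{F}_{\sqrt q}^\ast$, I use surjectivity of $\Nm\colon \mathbb{F}_q^\ast \to \mathbb{F}_{\sqrt q}^\ast$ from Lemma~\ref{lemma_hilbert90} to pick $\mu \in \mathbb{F}_q^\ast$ with $\Nm(\mu) = \mu^{\sqrt q + 1} = \lambda$, and set $T = \diag[1,1,\mu]$. Then $T^\ast \diag[1,1,\eta] T = \diag[1,1,\mu^{\sqrt q+1}\eta] = \diag[1,1,\eta']$.

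For ``$\Rightarrow$'', the key structural input is Lemma~\ref{number_point_b}: every $\mathbb{F}_q$-point of both $C_{\diag[1,1,\eta]}$ and $C_{\diag[1,1,\eta']}$ lies on the line $\{z=0\}$, and in each case there are $\sqrt q + 1 \ge 3$ such points. The relation $T^\ast \diag[1,1,\eta] T = \diag[1,1,\eta']$ in $PGL(3, \mathbb{F}_q)$ means that $T$, viewed as a projective transformation of $\mathbb{P}^2$ over $\mathbb{F}_q$, sends $C_{\diag[1,1,\eta]}$ to $C_{\diag[1,1,\eta']}$, so it maps the collinear rational point set of the source into that of the target, which again lies on $\{z=0\}$. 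Since three or more distinct collinear points determine their line uniquely, $T$ must preserve $\{z=0\}$ setwise, forcing its third row to have the form $(0, 0, g)$ with $g \neq 0$. I then write
\[
T = \begin{pmatrix} a & b & e \\ c & d & f \\ 0 & 0 & g \end{pmatrix}, \qquad g(ad-bc) \neq 0.
\]

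What remains is a direct block computation. Lifting the $PGL$-equality to $T^\ast \diag[1,1,\eta] T = \mu\, \diag[1,1,\eta']$ in $GL(3, \mathbb{F}_q)$ for some $\mu \in \mathbb{F}_q^\ast$, the $(1,3)$- and $(2,3)$-entries produce the system
\[
\begin{pmatrix} a^{\sqrt q} & c^{\sqrt q} \\ b^{\sqrt q} & d^{\sqrt q} \end{pmatrix} \begin{pmatrix} e \\ f \end{pmatrix} = \begin{pmatrix} 0 \\ 0 \end{pmatrix},
\]
whose coefficient matrix has determinant $(ad-bc)^{\sqrt q} \neq 0$, so $e = f = 0$. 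The $(1,1)$-entry then reads $\mu = \Nm(a) + \Nm(c) \in \mathbb{F}_{\sqrt q}^\ast$ (nonzero because $\mu \eta' \neq 0$), and the $(3,3)$-entry reads $\eta g^{\sqrt q + 1} = \mu \eta'$. Dividing gives $\eta'/\eta = \Nm(g)/\mu \in \mathbb{F}_{\sqrt q}^\ast$, as required.

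The main obstacle is the geometric step forcing $T$ to preserve $\{z=0\}$; once that constraint is in hand, the remainder is a routine block-matrix calculation.
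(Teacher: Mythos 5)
Your proof is correct and follows essentially the same route as the paper: the backward direction via surjectivity of the norm is identical, and the forward direction uses the same key input (Lemma~\ref{number_point_b} forcing $T$ to stabilize the line $\{z=0\}$, hence $t_{31}=t_{32}=0$). The only difference is the endgame: you additionally deduce $e=f=0$ from the $(1,3)$- and $(2,3)$-entries and read off the $(3,3)$-entry, whereas the paper compares the $(1,1)$-entry to see $\rho\in\mathbb{F}_{\sqrt{q}}^{\ast}$ and then takes determinants of both sides; both are routine and valid.
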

\begin{proof}
Let $\lambda = \eta' \eta^{-1}$, and
suppose that $\lambda \in \mathbb{F}_{\sqrt{q}}^{\ast}$.
Since $\Nm$ is surjective,
there is an element $\alpha \in \mathbb{F}_q^{\ast}$
such that $\alpha^{\sqrt{q}+1} = \lambda$.
So
\[
\diag[1,1, \alpha]^{\ast} \diag[1,1, \eta] \diag[1,1,\alpha]
 = \diag[1,1, \eta'].
\]

Conversely, let $T = (a_{ij}) \in GL(3, \mathbb{F}_q)$
such that
\[
T^{\ast} \diag[1,1, \eta] T
 = \rho \diag[1,1, \eta'],
\]
where we are regarding matrices as in $GL(3, \mathbb{F}_q)$
and $\rho \in \mathbb{F}_q^{\ast}$.
For both curves
$C_{\diag[1,1,\eta]}$ and $C_{\diag[1,1,\eta']}$,
their sets of $\mathbb{F}_q$-points
consist of $\sqrt{q} +1$ points on the line $\{ z=0\}$
by Lemma~\ref{number_point_b}.
Hence the line $\{ z=0\}$
is stable by the linear transformation of $\mathbb{P}^2$
corresponding $T$, that is
\[
\begin{pmatrix}
a_{11}& a_{12} & a_{13} \\
a_{21}& a_{22} & a_{23} \\
a_{31}& a_{32} & a_{33} 
\end{pmatrix}
\begin{pmatrix}
\ast \\ \ast \\ 0
\end{pmatrix}
=
\begin{pmatrix}
\ast \\ \ast \\ 0
\end{pmatrix}.
\]
Hence $a_{31}= a_{32}=0$,
and
\[
\begin{pmatrix}
a_{11}^{\sqrt{q}}& a_{21}^{\sqrt{q}} & 0 \\
a_{12}^{\sqrt{q}}& a_{22}^{\sqrt{q}} & 0 \\
a_{13}^{\sqrt{q}}& a_{23}^{\sqrt{q}} & a_{33}^{\sqrt{q}} 
\end{pmatrix}
\begin{pmatrix}
1& 0 & 0 \\
0& 1 & 0 \\
0& 0 & \eta 
\end{pmatrix}
\begin{pmatrix}
a_{11}& a_{12} & a_{13} \\
a_{21}& a_{22} & a_{23} \\
0& 0 & a_{33} 
\end{pmatrix}
=
\rho
\begin{pmatrix}
1& 0 & 0 \\
0& 1 & 0 \\
0& 0 & \eta' 
\end{pmatrix}.
\]
Comparing $(1,1)$-component of the both sides,
we have 
$a_{11}^{\sqrt{q}+1}+ a_{21}^{\sqrt{q}+1} = \rho$,
which guarantees that $\rho$ is an element of $\mathbb{F}_{\sqrt{q}}^{\ast}$.
Taking determinant of the both sides,
we have
\[
\det T^{(\sqrt{q})}\cdot \eta \cdot \det T  = \rho^3 \eta'.
\]
Hence
$\eta' \eta^{-1} = (\det T)^{\sqrt{q}+1} \rho^{-3} 
\in\mathbb{F}_{\sqrt{q}}^{\ast} $
\end{proof}

\begin{proof}[Proof of Theorem~\ref{maintheorem}]
(a) Suppose that $\omega =1$.
Then by Lemmas~\ref{number_on_infty_line}, \ref{number_on_affine},
the curve $C_{\omega}$
has $\sqrt{q}^3+1$ rational points.
Hence it is Hermitian if $q \neq 4$ \cite{hir-sto-tha-vol1991},
and all rational points on a Hermitian curve are inflexions.
When $q=4$, a non-Hermitian curve with $9$ rational points
can't have inflexions \cite[Remark 3.10]{hom2024}.

(b) Suppose that $\Nm \omega = 1$ and $\omega \neq 1$.
Then by Lemmas \ref{number_on_infty_line}, \ref{number_on_affine},
the curve $C_{\omega}$
has $\sqrt{q}+1$ rational points,
and those points lie on the line $\{ z=0 \}$.
Let $R$ be one of those points.
If $R$ is not an inflexion,
then $C_{\omega}.T_R(C_{\omega}) = \sqrt{q}R + R'$
with $R' \neq R$.
Since $R'$ is $\mathbb{F}_q$-point too,
$R'$ lies on the line $\{ z=0 \}$.
Hence $T_R(C_{\omega})$ coincides with the line $\{ z=0 \}$,
which is impossible.
Therefore all the $\sqrt{q}+1$ rational points are inflexions.

Next we show that
$C_{\omega}$ with
$\Nm \omega = 1$ and $\omega \neq 1$
is projectively equivalent to
\begin{equation}\label{diagonal_of_type1}
x^{\sqrt{q}+1} + y^{\sqrt{q}+1} + \eta z^{\sqrt{q}+1} =0
\end{equation}
for a certain 
$\eta \in \mathbb{F}_q^{\ast} \setminus \mathbb{F}_{\sqrt{q}}^{\ast}.$
Since $\Nm \omega =1$,
there is an element 
$\beta  \in \mathbb{F}_q^{\ast} \setminus \mathbb{F}_{\sqrt{q}}^{\ast}$
such that $\beta^{1-\sqrt{q}} = \omega$.
Using this $\beta$,
(\ref{at_least_two_inflexions}) can be rewritten as
\begin{equation}\label{rewrite_original}
(\beta x)^{\sqrt{q}}y +(\beta x)y^{\sqrt{q}}
             +\beta^{\sqrt{q}} z^{\sqrt{q}+1} =0.
\end{equation}
Replacing $\beta x$ with the new $x$
and applying Lemma~\ref{b_to_diagonal},
we get the equation
\[
x^{\sqrt{q}+1} + y^{\sqrt{q}+1} + \beta^{\sqrt{q}} z^{\sqrt{q}+1} =0.
\]
Replace $\beta^{\sqrt{q}}$ by $\eta$, then we have
(\ref{diagonal_of_type1}).
Conversely, tracing the reverse process, we know that
the curve defined by (\ref{diagonal_of_type1})
is projectively equivalent to $C_{\eta^{\sqrt{q}-1}}$.

The last statement in (b) is just Lemma~\ref{equiv_type1}.

(c) Suppose that $\Nm \omega \neq 1$.
From Lemmas \ref{number_on_infty_line}, \ref{number_on_affine},
$C_{\omega}(\mathbb{F}_q)$ consists of $q+1$ rational points.
Note that $P=(1,0,0)$ and $Q=(0,1,0)$
are inflexions, and
they are only $\mathbb{F}_q$-points lie on the line
$\{z=0\}$
by Lemma~\ref{coordinate_simplify}.

We will show that any point
$R \in C_{\omega}(\mathbb{F}_q)\setminus \{P, Q\}$
is not an inflexion.
Since $R$ does not lie on the line $\{z=0\}$,
we may assume that $R=(x_0, y_0, 1)$,
where $(x_0, y_0)$ satisfies the affine equation (\ref{affine_eq_omega}),
in particular $x_0 \neq 0$.
We want compute the divisor
$C_{\omega}. T_R(C_{\omega})$.
Let $f(x,y) = x^{\sqrt{q}}y + \omega x y^{\sqrt{q}} + 1$.
Since $f_x = \omega  y^{\sqrt{q}}$
and $f_y =  x^{\sqrt{q}}$,
the tangent line at $R$ is given by
\[
 \omega  y_0^{\sqrt{q}}x +  x_0^{\sqrt{q}}y +1=0,
\]
or, alternatively
\begin{equation}\label{tangent_at_R}
y = -\omega (\frac{y_0}{x_0})^{\sqrt{q}} x - (\frac{1}{x_0})^{\sqrt{q}}
\end{equation}
because $x_0 \neq 0$.
Take $\sqrt{q}$-th power of (\ref{tangent_at_R}), then
\begin{equation}\label{qthpower_tangent_at_R}
y^{\sqrt{q}} = -\omega^{\sqrt{q}} (\frac{y_0}{x_0}) x^{\sqrt{q}} 
- (\frac{1}{x_0}).
\end{equation}
Substitute (\ref{tangent_at_R}) and (\ref{qthpower_tangent_at_R})
into (\ref{affine_eq_omega}),
then
\begin{equation}\label{substitute}
-\left\{
\left( \omega (\frac{y_0}{x_0})^{\sqrt{q}} + \omega^{\sqrt{q}+1}\frac{y_0}{x_0}
\right)
x^{\sqrt{q}+1} +
(\frac{1}{x_0})^{\sqrt{q}}x^{\sqrt{q}}
+\frac{\omega}{x_0}x -1
\right\}=0.
\end{equation}
On the other hand, since
\[
0 = \omega \left(
\frac{f(x_0, y_0)}{x_0^{\sqrt{q}+1}}
\right)^{\sqrt{q}}
=  \omega (\frac{y_0}{x_0})^{\sqrt{q}} + \omega^{\sqrt{q}+1}\frac{y_0}{x_0} 
  + \frac{\omega}{x_0^{\sqrt{q}+1}},
\]
the coefficient of $x^{\sqrt{q} +1}$ in (\ref{substitute})
is $\frac{\omega}{x_0^{\sqrt{q}+1}}$.
Hence (\ref{substitute})
can be written as
\begin{align}
&\frac{\omega}{x_0^{\sqrt{q}+1}} x^{\sqrt{q}+1}
-\frac{1}{x_0^{\sqrt{q}}} x^{\sqrt{q}}
-\frac{\omega}{x_0}x  +1 \\
=& \frac{\omega}{x_0^{\sqrt{q}+1}}
\left(x-x_0 \right)^{\sqrt{q}}
\left( x - \frac{x_0}{\omega}\right)=0.
\end{align}
Since $\omega \neq 1$, $x_0 \neq \frac{x_0}{\omega}$,
which means that $R$ is not an inflexion.
Hence only $P$ and $Q$ are inflexions of $C_{\omega}$.

Lastly we classify the curves $C_{\omega}$ with $\omega \neq 1$ 
up to projective equivalence.
For $\omega, \omega' \in \mathbb{F}_q^{\ast}\setminus \Ker( \Nm)$,
let
\[
A_{\omega} = 
\begin{pmatrix}
0&1&0\\
\omega&0&0\\
0&0&1
\end{pmatrix}
\text{ and }
A_{\omega'} = 
\begin{pmatrix}
0&1&0\\
\omega'&0&0\\
0&0&1
\end{pmatrix}.
\]
Then two curves $C_{\omega}$ and $C_{\omega'}$ are projectively equivalent 
to each other
if and only if there is a matrix
$T = (a_{ij}) \in GL(3, \mathbb{F}_q)$
such that 
\begin{equation}\label{last_condition_c}
T^{\ast} A_{\omega} T = \rho A_{\omega'}
\end{equation}
for some $\rho \in \mathbb{F}_q^{\ast}$.
Since inflexions of both curves are $\{P, Q\}$ that are lie on
the line $\{z=0\}$, and
\[
T_P(C_{\omega})\cap T_Q(C_{\omega}) =
T_P(C_{\omega'})\cap T_Q(C_{\omega'})
= \{ (0,0,1)\},
\]
the line $\{ z=0 \}$
and the point $(0,0,1)$ are stable by the projective transformation $T$.
Hence we may choose $T$ as
$
\begin{pmatrix}
a_{11}&a_{12}&0\\
a_{21}&a_{22}&0\\
0&0&1
\end{pmatrix}.
$
Then, comparing $(3,3)$-entries of both sides of (\ref{last_condition_c}),
we know $\rho =1$.
Now we write down (\ref{last_condition_c}) componentwise:
\begin{align}
\omega a_{21}^{\sqrt{q}}a_{11} +  a_{11}^{\sqrt{q}}a_{21} &= 0 \label{1-1}\\
\omega a_{22}^{\sqrt{q}}a_{12} +  a_{12}^{\sqrt{q}}a_{22} &= 0 \label{2-2}\\
\omega a_{21}^{\sqrt{q}}a_{12} +  a_{11}^{\sqrt{q}}a_{22} &= 1 \label{1-2}\\
\omega a_{22}^{\sqrt{q}}a_{11} +  a_{12}^{\sqrt{q}}a_{21} &= \omega' 
\label{2-1},
\end{align}
where (\ref{1-1})--(\ref{2-1}) come from
comparison of $(1,1)$,$(2,2)$, $(1,2)$ and $(2,1)$ components respectively.
Take $\sqrt{q}$th power of (\ref{1-1})
\begin{equation}\label{qth_power_1-1}
\omega^{\sqrt{q}} a_{21}a_{11}^{\sqrt{q}} +  a_{11}a_{21}^{\sqrt{q}} = 0,
\end{equation}
and make $(\ref{qth_power_1-1}) \times \omega - (\ref{1-1})$,
then
\[
(\omega^{\sqrt{q}+1} -1) a_{11}^{\sqrt{q}}a_{21} =0.
\]
Since $\Nm \omega \neq 1$, we conclude that $a_{11}=0$ or $a_{21}=0$.
Similarly, using (\ref{2-2}), we have $a_{22}=0$ or $a_{12}=0$.
Since $\det T \neq 0$,
only possibilities of $T$ are
\[
T = \begin{pmatrix}
a_{11}&0&0\\
0&a_{22}&0\\
0&0&1
\end{pmatrix}
\text{or}
\begin{pmatrix}
0&a_{12}&0\\
a_{21}&0&0\\
0&0&1
\end{pmatrix}.
\]
If the first case occurs, then
$a_{11}^{\sqrt{q}}a_{22}=1$ by (\ref{1-2}) and
$\omega a_{22}^{\sqrt{q}}a_{11} = \omega'$ by (\ref{2-1}),
which implies that $\omega = \omega'$.
If the second case occurs,
then
\[
\omega a_{21}^{\sqrt{q}}a_{12}=1 \text{ and } a_{12}^{\sqrt{q}}a_{21}=\omega',
\]
which implies that $\omega \omega'^{\sqrt{q}} =1$.
\end{proof}
\gyokan

\noindent
\textbf{Acknowledgment.}
The second author was partially supported by
the National Research Foundation of Korea(NRF) grant funded by the Korea government(MSIT) (2022R1A2C1012291).


\end{document}